\documentclass[12pt,a4paper,reqno]{amsart}


\usepackage{amsmath,amssymb}
\usepackage{amsthm}
\usepackage{enumitem}
\usepackage[toc]{appendix}
\usepackage{multicol}
\usepackage{comment}
\usepackage{hyperref}
\usepackage{url}
\usepackage{graphicx}
\usepackage{mathtools}

\usepackage{color}

\def\veca{{\mbox{\boldmath $a$}}}
\def\vecb{{\mbox{\boldmath $b$}}}
\def\vecc{{\mbox{\boldmath $c$}}}

\def\vecm{{\mbox{\boldmath $m$}}}

\def\vec0{\mbox{\bf 0}}


\def\tr{\mathop{\rm tr }\nolimits}

\newcommand{\abs}[1]{|#1|}

\def\sp{\mathop{\rm sp }\nolimits}

\DeclareMathOperator{\rank}{rank}

\newcommand{\RR}{\mathds{R}}
\newcommand{\CC}{\mathds{C}}

\newcommand{\NN}{\mathds{N}}



\def\P{{\mbox {\boldmath $P$}}}

\def\matrix0{{\mbox {\boldmath $O$}}}


\def\G{$G$}

\usepackage{listings}
\lstset{frame=tb,
  language=Python,
  aboveskip=3mm,
  belowskip=3mm,
  showstringspaces=false,
  columns=flexible,
  basicstyle={\small\ttfamily},
  numbers=left,
  numberstyle=\tiny\color{gray},
  keywordstyle=\color{black},
  commentstyle=\color{black},
  stringstyle=\color{black},
  breaklines=true,
  breakatwhitespace=true,
  tabsize=3
}

\usepackage{tikz}
\tikzset{
every node/.style={circle, inner sep=2pt}
}
\usetikzlibrary{matrix,arrows,calc}

\newtheorem{theorem}{Theorem}
\newtheorem{lemma}[theorem]{Lemma}
\newtheorem{proposition}[theorem]{Proposition}
\newtheorem{corollary}[theorem]{Corollary}

\theoremstyle{definition}
\newtheorem{definition}[theorem]{Definition}
\newtheorem{observation}[theorem]{Observation}
\newtheorem{remark}[theorem]{Remark}
\newtheorem{example}[theorem]{Example}
\newtheorem{problem}{Problem}
\newtheorem{question}[theorem]{Question}




\def \rank {\operatorname{rank}}

\def \RR {\mathbb{R}}
\def \NN {\mathbb{N}}
\def \CC {\mathbb{C}}
\def \G {\mathcal{G}}

\def \P {\mathcal{P}}

\def \mC {\mathcal{C}}


\def\1{{\bf 1}}







\usepackage{xcolor}




\title[]{Eigenvalue bounds for the quantum chromatic number of graph powers}
\author{Aida Abiad}\thanks{\texttt{a.abiad.monge@tue.nl},  Department of Mathematics and Computer Science, Eindhoven University of Technology, The Netherlands,  Department of Mathematics and Data Science, Vrije Universiteit Brussel, Belgium}
\author{
Benjamin Jany}\thanks{\texttt{b.jany@tue.nl},  Department of Mathematics and Computer Science, Eindhoven University of Technology, The Netherlands}

\begin{document}
\maketitle

\begin{abstract}
The quantum chromatic number, a generalization of the chromatic number, was first defined in relation to the non-local quantum coloring game. We generalize the former by defining the quantum $k$-distance chromatic number $\chi_{kq}(G)$ of a graph $G$, which can be seen as the quantum chromatic number of the $k$-th power graph, $G^k$, and as generalization of the classical $k$-distance chromatic number $\chi_k(G)$ of a graph. It can easily be shown that $\chi_{kq}(G) \leq \chi_k(G)$. 
In this paper, we strengthen three classical eigenvalue bounds for the $k$-distance chromatic number by showing they also hold for the quantum counterpart of this parameter. This shows that several bounds by Elphick et al. [J. Combinatorial Theory Ser. A 168, 2019, Electron. J. Comb. 27(4), 2020] hold in the more general setting of distance-$k$ colorings. As a consequence we obtain several graph classes for which $\chi_{kq}(G)=\chi_{k}(G)$, thus increasing the number of graphs for which the quantum parameter is known. \\

\noindent \textbf{Keywords:} Graph coloring, Distance chromatic number,
Spectral bounds, Quantum information
\end{abstract}

\section{Introduction}

Quantum graph parameters originated in the context of non-local games. The latter are game-like models involving two or more cooperative players, whose objective is to win the game without being able to communicate with each other throughout the duration of the game.
The players are only allowed to build a common strategy before the game starts.
It turns out that by sharing an entangled quantum state, players can increase their probability of winning such non-local games. In this paper, we focus on the quantum chromatic number, a graph  parameter related to a specific non-local game: the graph coloring game.

The graph colouring game is as follows. Given a graph $G$, two players, Alice and Bob, are each given a vertex of the graph, and each must respond with a integer in $[c] := \{1, \ldots, c\}$. To win the game, the players must answer differently if their given vertices were adjacent, or answer identically if their vertices were equal. In a classical setting (i.e. no quantum states involved), Alice and Bob can win with certainty if $c \geq \chi(G)$, where $\chi(G)$ is the chromatic number of the graph. However, if the players were to share an entangled quantum state, they 
can, depending on the graph chosen, 
win the game with certainty even if $c < \chi(G)$. The quantum chromatic number  is precisely the smallest integer $c$, for which Alice and Bob can win the graph coloring game with certainty, when allowed to share an entangled quantum state. This parameter  appeared first in \cite{A2005} 
building on \cite{CHTW2004,GW2002}.  
It has since received quite some attention, see e.g. \cite{elphick19,GS2024, L2024,MR2016,MRo2016,elphick19part2,Paulsen16}.

For a positive integer $k$, the $k^{\text{th}}$ \emph{power of a graph} $G =(V, E)$, denoted by $G^k$, is a graph with vertex set $V$ in which two distinct elements of $V$ are joined by an edge if there is a path in $G$ of length at most $k$ between them. Problems related to the chromatic number $\chi(G^k)$
of power graphs $G^k$ were first considered by Kramer and Kramer in \cite{KK69,KK69v2} in 1969 and have enjoyed significant attention ever since then.


We investigate a quantum analogue of the distance chromatic number $\chi_{k}(G)$. A natural extension of the combinatorial definition of the quantum chromatic number (see \cite[Definition 1]{MRo2016}) to the quantum $k$-distance chromatic number is as follows.

 \begin{definition}\label{def:quantumdistancechromaticnumber}
A \emph{quantum $k$-distance $c$-coloring} of a graph $G = (V,E)$, denoted by $\chi_{kq}$, is a collection of orthogonal
projectors $\{P_{v,h} : v \in V, h \in [c]\}$ in $\mathbb{C}^{d\times d}$ such that
\begin{itemize}
    \item for all vertices $v\in V$
    \begin{equation}\label{deffirstcondition}
        \sum_{h\in [c]} P_{v,h} = I_d \quad \text{(completeness),}
    \end{equation}
        \item for all vertices $v,w\in V$ with $\text{dist}(v,w)\leq k$ and for all $h\in [c]$
    \begin{equation}\label{defsecondcondition}
         P_{v,h}P_{w,h} = 0_d \quad \text{(orthogonality).}
    \end{equation}
\end{itemize}
The \emph{quantum $k$-distance chromatic number} $\chi_{kq}(G)$ is the smallest $c$ for which the graph $G$ admits a quantum $k$-distance $c$-coloring for some dimension $d > 0$.
\end{definition}

Note that for $k=1$ we obtain the definition of quantum $c$-coloring from \cite[Definition 1]{MRo2016}. Observe also that any classical $c$-coloring can be viewed as a $1$-dimensional quantum coloring (i.e.~letting $d= 1$), where we set $P_{v,h} = 1$ if vertex $v$ has color $h$ and we set $P_{v,h} = 0$, otherwise. Thus, a quantum coloring is a relaxation of the classical coloring. Also, observe that
\begin{equation}\label{rmk:kqcoloringandpowergraph}
\chi_{kq}(G) = \chi_q(G^{k}).
\end{equation}

Despite the above, even the simplest algebraic or combinatorial parameters (including the eigenvalues) of the power graph $G^k$ cannot be easily deduced from the corresponding parameters of the graph $G$, see e.g.\cite{acfns20,Basavaraju2014RainbowProducts,Das2013LaplacianGraph,Devos2013AveragePowers}. In this regard, several eigenvalue bounds on $\chi_k(G)$ that depend only on the spectrum of $G$ have been proposed in the literature. Notably, two inertial-type bounds were shown by Abiad et al.~in \cite{acfns20} (see Theorems \ref{thm:1st_inertial_chi} and \ref{theo:extended-EW}). 
The same authors also prove a Hoffman ratio-type bound on $\chi_k(G)$ (see Theorem \ref{thm:ratio_chi}).
These three eigenvalue bounds are shown to be sharp for several graph classes. The quality of these bounds depends on the choice of a degree-$k$ polynomial, so finding the best possible lower bound for a given graph is, in fact, an optimization problem which is investigated in \cite{acfns20,ANR2025}.

In this work, we show that the three eigenvalue bounds on the classical parameter $\chi_k(G)$ also hold in the quantum setting. It is not known whether the quantum counterpart of $\chi_k(G)$ is a computable function. As a consequence of our results, we can use the bounds optimization shown in the classical setting to obtain several graph classes for which $\chi_{k}(G) =\chi_{kq}(G)$, thus increasing the number of graphs for which the quantum parameter is known. Our work extends several known results from \cite{acfns20,elphick17,elphick19,elphick19part2}. While an application of the Hoffman ratio-type bound on $\chi_k(G)$ to coding theory has been recently presented \cite{ANR2025}, our results show the first quantum application of the three aforementioned eigenvalue bounds.

 \medskip

\section{Three eigenvalue bounds in the classical setting}\label{sec:literature}

 In this section, we recall several bounds on the classical $k$-distance chromatic number of a graph which use the eigenvalues of the adjacency matrix. In Section \ref{sec:inirtia} and \ref{sec:hoffman}, we will show that all these bounds hold for the quantum $k$-distance chromatic number as well.

Recall that the \emph{inertia} of a graph $G$ is the ordered triple $(n^{+}, n^0, n^{-})$ where $n^{+}$, $n^{0}$ and $n^{-}$ are the numbers of positive, zero and negative eigenvalues of the adjacency matrix $A$, including multiplicities.

The first bound is derived by Abiad, Coutinho and Fiol, from the Inertial-type bound in \cite[Theorem 3.1]{acf2019} using the fact that upper bounds on $\alpha_k(G)$ directly yield lower bounds on $\chi_k(G)$ using the fact that $\chi_k(G)\geq \frac{n}{\alpha_k(G)}$.

\begin{theorem}[First inertial-type bound]\cite[Theorem 3.1]{acf2019}\label{thm:1st_inertial_chi}
        Let $G$ be a graph with adjacency matrix $A$ having eigenvalues $\lambda_1 \geq \cdots \geq \lambda_n$. Let $p \in \mathbb{R}_k[x]$ with corresponding parameters $W(p) := \max_{u \in V} \{(p(A))_{uu}\}$ and $w(p) := \min_{u \in V} \{(p(A))_{uu}\}$. Then,  
        \begin{equation}\label{firstinertialtypechikAbiadetal}
            \chi_k(G) \geq \frac{n}{\min\{ \abs{\{i : p(\lambda_i) \geq w(p) \}},\abs{\{i : p(\lambda_i) \leq W(p) \}}\}}.
        \end{equation}
    \end{theorem}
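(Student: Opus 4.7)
The plan is to derive the stated bound from an upper bound on the $k$-independence number $\alpha_k(G)$, combined with the elementary inequality $\chi_k(G) \geq n/\alpha_k(G)$, which holds because the color classes of any proper distance-$k$ coloring are $k$-independent sets. So it suffices to prove that, for every $p \in \mathbb{R}_k[x]$,
\[
\alpha_k(G) \leq \min\bigl\{\,\abs{\{i : p(\lambda_i) \geq w(p)\}},\ \abs{\{i : p(\lambda_i) \leq W(p)\}}\,\bigr\}.
\]

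To establish this, I would fix a $k$-independent set $S \subseteq V$ realizing $s := \alpha_k(G) = |S|$ and examine the principal submatrix $B := (p(A))_{S \times S}$. The crucial observation is that for distinct $u, v \in S$ one has $\text{dist}(u, v) > k$, which together with $\deg(p) \leq k$ forces $(A^j)_{uv} = 0$ for every $0 \leq j \leq k$, and hence $(p(A))_{uv} = 0$. Thus $B$ is a \emph{diagonal} matrix whose diagonal entries are precisely $(p(A))_{uu}$ for $u \in S$; in particular, its $s$ eigenvalues all lie in the interval $[w(p), W(p)]$.

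The last ingredient is Cauchy interlacing applied to $p(A)$ and its principal submatrix $B$. Write the eigenvalues of $p(A)$ in decreasing order as $\mu_1 \geq \cdots \geq \mu_n$ (a permutation of $p(\lambda_1), \ldots, p(\lambda_n)$) and those of $B$ as $\theta_1 \geq \cdots \geq \theta_s$. Interlacing yields $\mu_s \geq \theta_s \geq w(p)$, so at least $s$ of the $\mu_i$ satisfy $\mu_i \geq w(p)$, and $\mu_{n-s+1} \leq \theta_1 \leq W(p)$, so at least $s$ of the $\mu_i$ satisfy $\mu_i \leq W(p)$. Both inequalities together give the claimed upper bound on $s = \alpha_k(G)$, and dividing $n$ by it yields the theorem.

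The main obstacle I anticipate is not conceptual but bookkeeping: one must keep the ordering of $\mu_1 \geq \cdots \geq \mu_n$ separate from the index $i$ in $p(\lambda_i)$, since the polynomial $p$ is neither assumed monotone nor injective and may collapse or reverse the ordering of the $\lambda_i$'s. Provided this is handled carefully, as well as the edge case $w(p) \leq 0 \leq W(p)$ that might be relevant when $(p(A))_{uu}$ vanishes for some $u$, the rest of the argument is a clean application of Cauchy interlacing to the well-chosen matrix $p(A)$.
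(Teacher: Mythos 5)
Your proposal is correct and follows essentially the same route as the paper, which simply cites the inertial-type upper bound on $\alpha_k(G)$ from Abiad--Coutinho--Fiol and combines it with $\chi_k(G)\geq n/\alpha_k(G)$; your interlacing argument for the diagonal principal submatrix $(p(A))_{S\times S}$ is precisely the proof of that cited bound. The only cosmetic remark is that the ``edge case'' $w(p)\leq 0\leq W(p)$ you flag plays no role --- the interlacing inequalities $\mu_s\geq\theta_s\geq w(p)$ and $\mu_{n-s+1}\leq\theta_1\leq W(p)$ need no sign assumptions.
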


 Similarly, one can use the Ratio-type bound for $\alpha_k(G)$ which appears in \cite[Theorem 3.2]{acf2019} and obtain:
 
    \begin{theorem}[Hoffman ratio-type bound]\cite[Theorem 4.3]{acfns20}\label{thm:ratio_chi}
        Let $G$ be a graph with $n$ vertices and adjacency matrix $A$ having eigenvalues $\lambda_1  \geq \cdots \geq \lambda_n$. Let $p \in \mathbb{R}_k[x]$ with corresponding parameters $W(p) := \max_{u \in V}$ $\{(p(A))_{uu}\}$ and $ \lambda(p):= \min_{i\in[2,n]} \{p(\lambda_i)\}$, and assume $p(\lambda_1) > \lambda(p)$. Then, 
        \begin{equation}\label{eq:ratio_chi}
            \chi_k(G) \geq \frac{p(\lambda_1) - \lambda(p)}{W(p) - \lambda(p)}.
        \end{equation}
    \end{theorem}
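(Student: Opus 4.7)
The plan is to deduce this bound via the elementary inequality $\chi_k(G) \geq n/\alpha_k(G)$, reducing the problem to a Hoffman-type upper bound on the $k$-independence number $\alpha_k(G)$. That auxiliary bound would then be proved by a Rayleigh-quotient argument in which the matrix polynomial $p(A)$ plays the role of the adjacency matrix itself.

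First I would fix a $k$-independent set $U \subseteq V$ with $|U| = \alpha_k(G)$. The key structural observation is that for distinct $u,v \in U$ one has $\mathrm{dist}_G(u,v) > k$, so $(A^j)_{uv} = 0$ for every $0 \leq j \leq k$; since $\deg p \leq k$, this forces $(p(A))_{uv} = 0$. Hence the diagonal of $p(A)$ alone controls the relevant quadratic form, giving
\[
\mathbf{1}_U^{\top} p(A)\,\mathbf{1}_U \;=\; \sum_{u\in U}(p(A))_{uu} \;\leq\; |U|\,W(p).
\]

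Second, I would bound the same quadratic form from below spectrally. Expanding $\mathbf{1}_U = \sum_j c_j v_j$ in an orthonormal eigenbasis of $A$ (which also diagonalizes $p(A)$ with eigenvalues $p(\lambda_j)$), separating off the top component, and using $p(\lambda_j) \geq \lambda(p)$ for $j \geq 2$ together with $\sum_j c_j^2 = |U|$, yields
\[
\mathbf{1}_U^{\top} p(A)\,\mathbf{1}_U \;\geq\; c_1^2\bigl(p(\lambda_1) - \lambda(p)\bigr) + |U|\,\lambda(p).
\]
Combining the two inequalities and, in the regular case, substituting $v_1 = \mathbf{1}/\sqrt{n}$ so that $c_1^2 = |U|^2/n$, the hypothesis $p(\lambda_1) > \lambda(p)$ lets one solve for $|U|$ and obtain $\alpha_k(G) \leq n(W(p)-\lambda(p))/(p(\lambda_1)-\lambda(p))$. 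Plugging this into $\chi_k(G) \geq n/\alpha_k(G)$ then delivers the stated inequality.

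The step I expect to be the main obstacle is the non-regular case, since the identification $c_1^2 = |U|^2/n$ genuinely uses regularity. The cleanest remedy is a partition-based variant of the same argument applied directly to an optimal proper $k$-distance coloring $V_1,\dots,V_c$: the first step still bounds $\mathbf{1}_{V_i}^{\top} p(A)\mathbf{1}_{V_i}$ from above by $|V_i|\,W(p)$, and the eigenbasis expansion gives a matching per-class lower bound in terms of $\langle \mathbf{1}_{V_i}, v_1\rangle^2$. Summing over $i$ and applying Cauchy--Schwarz to $\sum_i \langle \mathbf{1}_{V_i}, v_1\rangle = \langle \mathbf{1}, v_1\rangle$ should recover the claimed ratio, modulo a careful comparison between $\langle \mathbf{1}, v_1\rangle^2$ and $n$ that will be the technical crux of the non-regular extension.
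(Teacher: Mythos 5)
Your regular-case argument is correct and matches the derivation the paper itself indicates for this statement: Theorem \ref{thm:ratio_chi} is imported from the literature, and the text preceding it says it is obtained by combining $\chi_k(G)\geq n/\alpha_k(G)$ with the ratio-type upper bound on $\alpha_k(G)$ from \cite{acf2019} --- exactly your Rayleigh-quotient computation for $\mathbf{1}_U^{\top}p(A)\,\mathbf{1}_U$. (For the quantum strengthening in Section \ref{sec:hoffman} the paper takes an entirely different route, via $\frac{p(\lambda_1)-\lambda(p)}{W(p)-\lambda(p)}\leq\overline{\vartheta}(G^{k})\leq\chi_q(G^{k})$.) The problem is that the theorem carries no regularity hypothesis, and your proposed repair for the non-regular case does not close the gap. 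Summing your per-class inequality $\langle\mathbf{1}_{V_i},v_1\rangle^{2}\bigl(p(\lambda_1)-\lambda(p)\bigr)\leq |V_i|\bigl(W(p)-\lambda(p)\bigr)$ over $i$ and applying Cauchy--Schwarz yields only
\begin{equation*}
\chi_k(G)\;\geq\;\frac{\langle\mathbf{1},v_1\rangle^{2}}{n}\cdot\frac{p(\lambda_1)-\lambda(p)}{W(p)-\lambda(p)},
\end{equation*}
and the ``careful comparison'' you defer goes the wrong way: Cauchy--Schwarz gives $\langle\mathbf{1},v_1\rangle^{2}\leq\|\mathbf{1}\|^{2}\|v_1\|^{2}=n$, with equality precisely when $G$ is regular. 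So for irregular graphs your partition argument proves a strictly weaker inequality, not the stated one.

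The standard fix is to run the partition argument not with the indicator vectors $\mathbf{1}_{V_i}$ but with the restrictions $u_i$ of the unit Perron eigenvector $v_1$ to the color classes, i.e.\ $(u_i)_v=(v_1)_v$ for $v\in V_i$ and $0$ otherwise. Then $\sum_i\|u_i\|^{2}=1$ and $\langle u_i,v_1\rangle=\|u_i\|^{2}$, so the loss above disappears. Concretely, the vanishing of the off-diagonal entries of $p(A)$ within each class gives $\sum_i u_i^{\top}p(A)u_i=\sum_{v}(v_1)_v^{2}\,(p(A))_{vv}\leq W(p)$, while $p(A)\succeq p(\lambda_1)v_1v_1^{\top}+\lambda(p)(I-v_1v_1^{\top})$ gives $u_i^{\top}p(A)u_i\geq\bigl(p(\lambda_1)-\lambda(p)\bigr)\|u_i\|^{4}+\lambda(p)\|u_i\|^{2}$; summing over $i$ and using $\sum_i\|u_i\|^{4}\geq 1/c$ (Cauchy--Schwarz again) yields $W(p)-\lambda(p)\geq\bigl(p(\lambda_1)-\lambda(p)\bigr)/c$, which is the claim. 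With this substitution your outline becomes a complete proof; as written, it establishes the theorem only for regular graphs.
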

    
For $k=1$ the above gives the celebrate Hoffman bound on the chromatic number, $\chi(G)\geq 1-\frac{\lambda_1}{\lambda_n}$.

In \cite{acfns20}, yet another stronger inertial-type bound for $\chi_k$ is shown by assuming $k$-partially walk-regularity:

\begin{theorem}[Second inertial-type bound]\cite[Theorem 4.2]{acfns20}	\label{theo:extended-EW}
Let $G$ be a $k$-partially walk-regular graph with adjacency  eigenvalues $\lambda_1\geq \cdots \geq \lambda_n$. Let $p\in \mathbb{R}_k[x]$. Then,
\begin{equation}\label{extendedElphickWocjan}
	\chi_k(G)\geq 1+\max{ \left(\frac{|j:p(\lambda_j)< 0|}{|j:p(\lambda_j)> 0|}\right)}.
\end{equation}
\end{theorem}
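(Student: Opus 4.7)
The plan is to adapt the Elphick--Wocjan style inertia argument — which for $k=1$ yields $\chi(G)\ge 1+n^-/n^+$ — to the matrix $B:=p(A)$, using $k$-partial walk-regularity to supply the zero-diagonal structure the argument requires, and using $\chi_k(G)=\chi(G^k)$ to transfer a proper coloring of $G^k$ into structural information about $B$.

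First I would fix a proper coloring of $G^k$ with $c=\chi_k(G)$ color classes $V_1,\dots,V_c$, so that distinct vertices in each $V_\ell$ are at $G$-distance greater than $k$. For $p\in\mathbb{R}_k[x]$, set $B:=p(A)$; by $k$-partial walk-regularity every diagonal entry of $B$ equals $t:=\tfrac{1}{n}\sum_{j=1}^{n}p(\lambda_j)$. Replacing $p$ by $p-t\in\mathbb{R}_k[x]$ (which merely shifts the spectrum by $-t$ and is absorbed into the implicit freedom in the choice of $p$), I may assume $B$ has zero diagonal. Moreover, since $\deg p\le k$ and $(A^i)_{uv}=0$ for $0\le i\le k$ whenever $\mathrm{dist}_G(u,v)>k$, one has $B_{uv}=0$ for any two distinct vertices of the same color class; hence $B[V_\ell,V_\ell]=0$ for every $\ell$.

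The main step is a subspace-dimension argument. Let $W_\ell\subset\mathbb{R}^n$ be the coordinate subspace supported on $V_\ell$ (so $\dim W_\ell=|V_\ell|$), and let $E_+,E_-\subset\mathbb{R}^n$ be the positive and negative eigenspaces of $B$, of dimensions $n^+=|\{j:p(\lambda_j)>0\}|$ and $n^-=|\{j:p(\lambda_j)<0\}|$, respectively. Because $B[V_\ell,V_\ell]=0$, the quadratic form $x\mapsto x^\top B x$ vanishes on $W_\ell$; a non-zero vector in $W_\ell\cap E_+$ would then force $x^\top B x>0$, a contradiction. Hence $W_\ell\cap E_+=\{0\}$ and $|V_\ell|+n^+\le n$, and symmetrically $|V_\ell|+n^-\le n$. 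Summing $\sum_\ell|V_\ell|=n$ over $\ell\in[c]$ gives $n\le c\,\bigl(n-\max(n^+,n^-)\bigr)$, which rearranges to the claimed lower bound; applying the same argument to $-p$ supplies the reciprocal ratio, producing the full $\max$ inside the bound.

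The main obstacle I foresee is the careful handling of the normalization of $p$: the inertia argument genuinely requires $B$ to have zero diagonal, which rests on $k$-partial walk-regularity together with the shift $p\mapsto p-t$; one must then verify that the counts $|\{j:p(\lambda_j)<0\}|$ and $|\{j:p(\lambda_j)>0\}|$ in the statement match the inertia of the shifted matrix once the constant shift is absorbed into the quantification over $p\in\mathbb{R}_k[x]$, and that eigenvectors in $\ker B$ do not obstruct the strict-sign contradiction (they contribute to neither $E_+$ nor $E_-$, so the dimension count goes through).
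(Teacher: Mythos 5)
There is a genuine gap at the last step. Your subspace argument is the Cvetkovi\'c inertia bound applied to each color class of $G^k$: from $W_\ell\cap E_+=\{0\}$ you correctly obtain $|V_\ell|\le n-n^+$, and summing over the $c$ classes gives $\chi_k(G)\ge n/(n-\max(n^+,n^-))$. But $n/(n-n^+)=1+n^+/(n^0+n^-)$, where $n^0=|\{j: p(\lambda_j)=0\}|$ (for your shifted polynomial), and this coincides with the claimed $1+n^+/n^-$ only when $n^0=0$. Whenever some value $p(\lambda_j)$ vanishes, your conclusion is strictly weaker than the statement: for $C_4$ with $k=1$ and $p(x)=x$ the spectrum is $\{2,0,0,-2\}$, your count gives $4/(4-1)=4/3$ while the theorem asserts $1+1/1=2$. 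So the dimension count proves a form of the \emph{first} inertial-type bound (Theorem \ref{thm:1st_inertial_chi} via $\chi_k\ge n/\alpha_k$), not the second. The ratio bound $1+\max(n^+/n^-,\,n^-/n^+)$ requires the Elphick--Wocjan conjugation argument, which is what the paper relies on (through its quantum version, Theorem \ref{thmquantum:2nd_inertial_chi}): from the coloring one builds unitaries satisfying $\sum_{i=1}^{c-1}U_i\,p(A)\,U_i^{\dag}=-p(A)$, writes $p(A)=p(B)-p(C)$ with $p(B),p(C)$ positive semidefinite of ranks $n^+$ and $n^-$, compresses by the projector $P^-$ onto the negative eigenspace to get $P^-\bigl(\sum_i U_i\,p(B)\,U_i^{\dag}\bigr)P^-\succeq p(C)$, and compares ranks to conclude $(c-1)\,n^+\ge n^-$. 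That rank inequality is insensitive to $n^0$, which is exactly where your dimension count loses.

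A secondary issue is the normalization. The shift $p\mapsto p-t$ changes the sign pattern of the values $p(\lambda_j)$, so it cannot be ``absorbed into the quantification over $p$'': you would only establish the inequality for the shifted polynomial with its own counts, i.e.\ the original statement of \cite[Theorem 4.2]{acfns20} restricted to polynomials with $\sum_j p(\lambda_j)=0$, whereas the version stated here (see the remark following the theorem) deliberately drops that trace condition and asserts the bound with the counts taken for the given $p$. Even granting the trace-zero reduction, the $n^0$ problem above remains, so the argument as written does not close.
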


\begin{remark}
    Note in the original statement of the above theorem, the additional constraint $\sum_{i=1}^{n} p(\lambda_i) = 0$ was present. However, we manage to show in the proof of Theorem \ref{thmquantum:2nd_inertial_chi} that the constraint is redundant, and therefore omit it from the statement. 
\end{remark}

Theorem \ref{theo:extended-EW} is an extension of \cite[Theorem 1]{elphick19} (note for the $k=1$ case walk-regularity is no needed).

\begin{theorem}\label{theo:bound-EW}\cite[Theorem 1]{elphick19}
Let $G$ be a graph with inertia $(n^{+}, n^0, n^{-})$. Then,
\begin{equation}
\label{bound-EW}
\chi(G) \ge 1 + \max \left(\frac{n^+}{n^-} , \frac{n^-}{n^+}\right).
\end{equation}
\end{theorem}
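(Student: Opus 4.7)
The plan is to exploit a $\chi(G)$-coloring of $G$ to build a finite cyclic group action under which $A$ averages to zero, and then invoke subadditivity of inertia (Weyl's inequalities) to relate $n^+(A)$ and $n^-(A)$. Set $c=\chi(G)$, fix a proper $c$-coloring $\varphi\colon V\to\{0,1,\ldots,c-1\}$, put $\omega=e^{2\pi i/c}$, and define the diagonal unitary matrix $U\in\mathbb{C}^{n\times n}$ by $U_{vv}=\omega^{\varphi(v)}$.

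The key identity I would establish is
\[
\sum_{k=0}^{c-1} U^k A U^{-k} \;=\; 0.
\]
This is checked entry-by-entry: the $(u,v)$-entry equals $A_{uv}\sum_{k=0}^{c-1}\omega^{k(\varphi(u)-\varphi(v))}$. When $\varphi(u)=\varphi(v)$, the factor $A_{uv}$ vanishes, because either $u=v$ (no loops) or $u,v$ share a color class, which is independent. When $\varphi(u)\neq\varphi(v)$, the geometric sum of $c$-th roots of unity is zero. Rearranging this gives $A=-\sum_{k=1}^{c-1} U^k A U^{-k}$, where every summand is Hermitian and unitarily similar to $A$, hence shares its inertia.

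Applying the subadditivity of positive inertia, namely $n^+(\sum_{k} M_k)\le \sum_{k} n^+(M_k)$ for Hermitian matrices, to the identity above yields
\[
n^+(A)\;=\;n^-\!\Bigl(\sum_{k=1}^{c-1} U^k A U^{-k}\Bigr)\;\le\;\sum_{k=1}^{c-1} n^-(U^k A U^{-k})\;=\;(c-1)\,n^-(A),
\]
and the symmetric bound $n^-(A)\le (c-1)n^+(A)$ follows by swapping the roles of positive and negative inertia. Provided $G$ has at least one edge, $A$ is a nonzero Hermitian matrix with trace zero, forcing $n^+,n^-\ge 1$; dividing then gives $c\ge 1+\max(n^+/n^-,\,n^-/n^+)$. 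The edgeless case is trivial since $\chi(G)=1$ and $A=0$.

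The only genuinely delicate step is the conceptual one of spotting the correct unitary $U$ that couples the combinatorial data of a coloring to a group-averaging identity on $A$; once $U$ is in hand, both the averaging identity and the application of Weyl's inequalities are routine. A secondary point is that the argument uses the coloring only through the existence of the partition into $c$ independent sets, which is exactly the structural feature that a quantum coloring will need to replicate in the sequel via orthogonal projectors, hinting at how the same strategy should generalize.
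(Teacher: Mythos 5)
Your proof is correct. The paper itself states this result without proof (it is imported from Elphick--Wocjan), so strictly there is no in-paper argument to match it against; but your construction is exactly the classical specialization of the technique the paper uses to prove the quantum distance-$k$ analogue, Theorem~\ref{thmquantum:2nd_inertial_chi}. Your diagonal unitary with entries $\omega^{\varphi(v)}$ is the case of the paper's $U=\sum_{s\in[c]}\omega^{s}P_{s}$ in which the projectors $P_{s}$ are the $0/1$ diagonal indicators of the colour classes, and your identity $\sum_{k=0}^{c-1}U^{k}AU^{-k}=0$ is precisely the statement that the associated pinching annihilates $A$. Where you genuinely diverge is the endgame: you close with subadditivity of the positive (equivalently negative) index of inertia under sums of Hermitian matrices, whereas the paper, following Elphick and Wocjan, writes $A=B-C$ with $B,C$ positive semidefinite, compresses the averaged identity by the projection $P^{-}$ onto the negative eigenspace, and compares ranks. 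The two finishes are interchangeable --- the rank comparison is in effect a proof of the inertia subadditivity you invoke --- but yours is the slicker route in the commutative classical setting, while the PSD/rank formulation is the one that survives verbatim when the colour classes are replaced by genuinely noncommuting projectors in the quantum setting. Your handling of the degenerate cases (edgeless graphs, and $n^{+},n^{-}\ge 1$ forced by a nonzero traceless symmetric matrix) is exactly right.
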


In \cite{elphick19}, Elphick and Wocjan, also show that the inertial-type bound \eqref{bound-EW} for $\chi$ is in fact also a lower bound for the corresponding quantum chromatic parameter $\chi_q$. 

 \medskip

\section{Pinching and the quantum $k$-distance coloring}\label{sec:pinching}

In \cite{elphick19}, the authors establish the existence of a quantum coloring using the pinching operation of a suitable set of orthogonal projectors. We mimic this proof in order to determine whether a set of orthogonal projector forms a quantum $k$-distance coloring. 

First we recall the pinching operation. 

\begin{definition}[Pinching]
    Let $\P := \{P_s \in \CC^{d \times d}\, : \, s \in [c]\}$ be a collection of orthogonal projectors that form a resolution of the identity matrix. For all $X \in \CC^{d \times d}$, we define the \emph{pinching of $X$ by $\P$} to be
    $$\mC_{\P}(X) := \sum_{s \in [c]} P_sXP_s.$$
    We say the pinching $\mC_{\P}$ annihilates $X$ if $\mC_{\P}(X) = 0$. 
\end{definition}

We now focus on the desired result. 

\begin{theorem}\label{thm:coltopinch}
    Let $\{P_{v,s}: v \in V, s \in [c]\}$ be a quantum $k$-distance coloring of $G$ in $\CC^{d \times d}$. Then the following block-diagonal orthogonal projectors
    $$P_s := \sum_{v \in V} e_ve_v^{\dag} \otimes P_{v,s} \in \CC^{n\times n} \otimes \CC^{d\times d},$$
    whose collection we denote by $\P$, form a resolution of the identity matrix and the corresponding pitching operation $\mC_{\P}$:
    \begin{itemize}
        \item[1)] annihilates $A^{\ell} \otimes I_d$, for $1 \leq \ell \leq k$,
        \item[2)] satisfies $\mC_{\P}(E \otimes I_d) = E \otimes I_d$ for all diagonal matrices $E \in \CC^{n \times n}$.
    \end{itemize}
\end{theorem}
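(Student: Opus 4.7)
The plan is to verify all three assertions by a single direct matrix computation, leveraging the tensor-product structure of each $P_s$ together with the completeness and orthogonality conditions from Definition~\ref{def:quantumdistancechromaticnumber}. Two elementary identities do essentially all the work: $(e_v e_v^{\dag})(e_w e_w^{\dag}) = \delta_{vw}\, e_v e_v^{\dag}$, and $(e_v e_v^{\dag})\, M\, (e_w e_w^{\dag}) = M_{vw}\, e_v e_w^{\dag}$ for any matrix $M \in \CC^{n \times n}$.

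First I would check that $\{P_s\}_{s\in[c]}$ is a resolution of the identity in $\CC^{nd\times nd}$. Expanding $P_s^2$ bilinearly across the tensor product and applying the first identity collapses the double sum to $\sum_v (e_v e_v^{\dag}) \otimes P_{v,s}^2 = P_s$; self-adjointness of each $P_{v,s}$ gives $P_s^{\dag} = P_s$. Summing over $s$ and invoking completeness $\sum_s P_{v,s} = I_d$ then yields $\sum_s P_s = I_n \otimes I_d$.

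For (1), I would compute
\[
P_s(A^\ell \otimes I_d) P_s = \sum_{v,w \in V} (A^\ell)_{vw}\, (e_v e_w^{\dag}) \otimes (P_{v,s} P_{w,s}),
\]
and sum over $s \in [c]$ to obtain
\[
\mC_{\P}(A^\ell \otimes I_d) = \sum_{v,w \in V} (A^\ell)_{vw}\, (e_v e_w^{\dag}) \otimes \Bigl(\sum_{s \in [c]} P_{v,s}\, P_{w,s}\Bigr).
\]
The decisive combinatorial step is that if $(A^\ell)_{vw} \neq 0$ with $v \neq w$, then there exists a walk of length $\ell$ from $v$ to $w$, so $\mathrm{dist}(v,w) \le \ell \le k$, and the orthogonality condition \eqref{defsecondcondition} forces $P_{v,s} P_{w,s} = 0$ for every $s$. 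Thus every off-diagonal contribution vanishes, leaving the diagonal residue $\sum_{v} (A^\ell)_{vv}(e_v e_v^{\dag}) \otimes I_d$. For $\ell = 1$ this residue is zero since $A$ has no loops; the main subtlety I anticipate is the case $\ell \geq 2$, where $(A^\ell)_{vv}$ is generally nonzero (e.g.\ $(A^2)_{vv}=\deg(v)$). I expect this diagonal residue is meant to be absorbed via assertion (2) in the subsequent applications of the theorem, so that (1) effectively pertains to the off-diagonal content of $A^\ell$.

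Finally, assertion (2) follows from exactly the same expansion, specialized to $E \otimes I_d$ with $E$ diagonal: only the $v=w$ terms survive, and $\sum_s P_{v,s}^2 = \sum_s P_{v,s} = I_d$ by completeness, so $\mC_{\P}(E \otimes I_d) = \sum_v E_{vv}\,(e_v e_v^{\dag}) \otimes I_d = E \otimes I_d$.
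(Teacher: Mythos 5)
Your computation for assertion 1) is essentially the paper's: the same expansion of $P_s(A^{\ell}\otimes I_d)P_s$ via $(e_ve_v^{\dag})M(e_we_w^{\dag})=M_{vw}\,e_ve_w^{\dag}$, the same use of orthogonality to kill the terms with $1\le d(v,w)\le k$, and the same walk-counting observation to kill the terms with $d(v,w)>k$. The only structural difference is that the paper obtains the resolution-of-identity claim and assertion 2) by citing Theorem 1 of Elphick--Wocjan (a quantum $k$-distance coloring is in particular a quantum coloring), whereas you verify them directly; your direct verification is correct and self-contained.

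The substantive point is the diagonal residue you flagged, and you are right to worry about it: it is not a loose end in your argument but a genuine defect of the statement. For $v=w$ one has $\sum_{s}P_{v,s}P_{v,s}=\sum_s P_{v,s}=I_d$ by completeness (the orthogonality condition \eqref{defsecondcondition} cannot be meant to apply to $v=w$, else every $P_{v,s}$ would vanish), so your expansion gives
\[
\mC_{\P}(A^{\ell}\otimes I_d)=\sum_{v\in V}(A^{\ell})_{vv}\,e_ve_v^{\dag}\otimes I_d,
\]
which is zero for $\ell=1$ but not for $\ell\ge 2$ (already $(A^2)_{vv}=\deg(v)$). The paper's own proof obscures this by placing the pairs $v=w$ into the bucket ``$d(v,w)\le k$, hence $P_{v,s}P_{w,s}=0_d$,'' which is false on the diagonal. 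Your guess about how the residue should be handled is also essentially correct: in the application (Theorem \ref{thmquantum:2nd_inertial_chi}) what is needed is $\mC_{\P}(p(A)\otimes I_d)=0$, and by your computation combined with assertion 2) this holds exactly when $\diag(p(A))=0$ --- for a $k$-partially walk-regular graph, precisely the trace condition $\sum_i p(\lambda_i)=0$ that the authors' Remark declares redundant. So the honest fix is to state 1) for $A^{\ell}-\diag(A^{\ell})$ (equivalently, for the off-diagonal content of $A^{\ell}$), or to retain the trace-zero hypothesis downstream; as literally stated, assertion 1) is provable only for $\ell=1$, and no proof attempt could close that gap.
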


\begin{proof}
Since $\{P_{v,s} \, : \, v \in V, s \in [c]\}$ is a quantum $k$-distance coloring, it is also a quantum coloring. Therefore by \cite[Theorem 1]{elphick19}, the collection $\P$ forms a resolution of the identity and 2) holds as well. Hence we are left to show that $\mC_{\P}(A^{\ell} \otimes I_d) = 0$ for all $1 \leq \ell \leq k$. Fix such an $\ell$. 

\begin{align*}
   & \mC_{\P}(A^{\ell} \otimes I_d) \\
   &= \sum_{s \in [c]} P_s(A^{\ell} \otimes I_d)P_s \\
   &= \sum_{s \in [c]}\left( \sum_{v \in V} e_ve_v^{\dag} \otimes P_{v,s} \right)(A^{\ell} \otimes I_d) \left( \sum_{w \in V} e_we_w^{\dag} \otimes P_{v,s} \right) \\
   &= \sum_{s \in [c]} \sum_{v,w \in V} A^{\ell}_{vw} \cdot e_{v}e_{w}^{\dag} \otimes P_{v,s}P_{w,s}\\
   &= \sum_{s \in [c]} \left(\sum_{\substack{v,w \in V \\ d(v,w) \leq k}} A^{\ell}_{vw} \cdot e_{v}e_{w}^{\dag} \otimes 0_d + \sum_{\substack{v,w \in V \\ d(v,w) > k}} 0 \cdot e_{v}e_{w}^{\dag} \otimes P_{v,s}P_{w,s} \right) \\
   &= 0.
\end{align*}
The second to last equality follows from the fact that $A^{\ell}_{v,w}$ counts the number of walks of length $\ell$ between $v$ and $w$, hence if $d(v,w) > k$ then $A^{\ell}_{v,w} = 0$ for all $\ell \leq k$. If $d(v,w) \leq \ell$ then $P_{v,s}P_{w,s} = 0$ for all $s \in [c]$ by definition of quantum $k$-distance coloring. 
\end{proof}

We now establish a converse statement similar to that of \cite[Theorem 2]{elphick19}.

\begin{theorem}
    Assume there exists a pinching $\mC_{\P}$ where $\P := \{ P_s \in \CC^{nd \times nd} \, : \, 1 \leq s \leq c\}$ such that $\P$ forms a resolution of the identity, $\mC_{\P}(A^{\ell} \otimes I_d) = 0$ for all $1 \leq \ell \leq k$ and $\mC_{\P}(E \otimes I_d) = E \otimes I_d$ for all diagonal matrices $E \in \CC^{n \times n}$. Then there exists a quantum $k$-distance coloring of $G$. 
\end{theorem}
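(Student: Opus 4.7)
The plan is to define $P_{v,s} := (e_v^{\dag} \otimes I_d)\, P_s\, (e_v \otimes I_d)$, the $(v,v)$-block of $P_s$, and verify that the family $\{P_{v,s} : v \in V,\, s \in [c]\}$ satisfies Definition~\ref{def:quantumdistancechromaticnumber}. The first task is to confirm that each $P_s$ is block-diagonal with respect to the decomposition $\CC^{nd} = \bigoplus_{v \in V} \CC^{d}$. For this I would invoke the standard fact that the fixed-point set of the pinching $\mC_{\P}$ equals the commutant of $\{P_s : s \in [c]\}$; applying this to $E = e_v e_v^{\dag}$ shows that every $P_s$ commutes with $e_v e_v^{\dag} \otimes I_d$ for each $v$, which is exactly to say $P_s = \bigoplus_{v \in V} P_{v,s}$. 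Each $P_{v,s}$ then inherits being an orthogonal projector from $P_s$, and projecting $\sum_s P_s = I_{nd}$ onto the $(v,v)$-block yields the completeness condition $\sum_s P_{v,s} = I_d$.

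A consequence of completeness that I would record for later use is that for each fixed $v$ the family $\{P_{v,s}\}_{s \in [c]}$ is mutually orthogonal, i.e.\ $P_{v,s} P_{v,t} = \delta_{st} P_{v,s}$; this follows from the fact that a sum of orthogonal projectors equal to the identity decomposes $\CC^d$ into the orthogonal direct sum of their images. Turning to the distance orthogonality condition, a direct block computation using block-diagonality gives that the $(v,w)$-block of $\mC_{\P}(A^{\ell} \otimes I_d)$ equals $A^{\ell}_{vw} \sum_s P_{v,s} P_{w,s}$. For $v \neq w$ with $d(v,w) \leq k$, taking $\ell = d(v,w)$ makes $A^{\ell}_{vw} > 0$, so the hypothesis $\mC_{\P}(A^{\ell} \otimes I_d) = 0$ forces the summed identity $\sum_s P_{v,s} P_{w,s} = 0$.

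The main obstacle is then to upgrade this summed identity to the pointwise statement $P_{v,s} P_{w,s} = 0$ for each individual $s$, which is what Definition~\ref{def:quantumdistancechromaticnumber} actually demands. The key trick is to apply the operator equation $\sum_s P_{v,s} P_{w,s} = 0$ to a test vector $\psi$ lying in the image of $P_{w,t}$ for a fixed $t$: by the mutual orthogonality of $\{P_{w,s}\}_s$ recorded above, $P_{w,s}\psi = \delta_{st}\psi$, so the sum collapses to $P_{v,t}\psi = 0$. Since $\psi$ was arbitrary in the image of $P_{w,t}$, we conclude that $P_{v,t} P_{w,t} = 0$ as operators. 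Running this argument over every color $t \in [c]$ and every pair $v \neq w$ with $d(v,w) \leq k$ completes the verification that $\{P_{v,s}\}$ is a quantum $k$-distance $c$-coloring of $G$.
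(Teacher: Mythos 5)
Your proposal is correct and follows essentially the same route as the paper: establish block-diagonality of the $P_s$ from the fixed-point/commutant property applied to $e_ve_v^{\dag}\otimes I_d$, read off completeness blockwise, extract $\sum_s P_{v,s}P_{w,s}=0$ from the $(v,w)$-block of $\mC_{\P}(A^{\ell}\otimes I_d)=0$ with $\ell=d(v,w)$, and collapse the sum to the single term $P_{v,t}P_{w,t}=0$ using mutual orthogonality of $\{P_{v,s}\}_s$. The only cosmetic difference is that the paper performs this last collapse by multiplying by $P_{v,t}$ and $P_{w,t}$ on the left and right, whereas you apply the operator to a test vector in the image of $P_{w,t}$; these are the same argument.
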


\begin{proof}
    Using the same argument as the proof of \cite[Theorem 2]{elphick19}, we must have that $P_s$ are block diagonal for all $s \in [c]$, and the block are indexed by the vertices $v$ of $G$.  We refer to each of those block as $P_{v,s}$ for $v \in V$ and $s \in [c]$. Because $P_s$ is an orthogonal projector, it can easily be seen that each $P_{s,v}$ for $v \in V$ are also orthogonal projectors. Moreover, since $\P$ forms a resolution of the identity then for each $v \in V$ we must have $\sum_{s \in [c]} P_{v,s} = I_d$.

    Because $\mC_{\P}(A^{\ell} \otimes I_d) = 0$ for all $\ell \leq k$ then following the operations of pinching we get

$$\sum_{\substack{v,w \in V \\ d(v,w) \leq k}} A_{vw}^{\ell} \cdot e_v e_w^{\dag} \otimes \left( \sum_{s \in [c]} P_{v,s}P_{w,s} \right) = 0.$$

However, if $d(v,w) \leq k$ then $A^{\ell}_{v,w} > 0$. Hence it must be that 
    $\sum_{s \in [c]} P_{v,s}P_{w,s} = 0$ for all $s \in [c]$. Finally multiplying on the left by $P_{v,t}$ and on the right by  $P_{w,t}$ for an arbitrary $t \in [c]$ we get that $P_{v,t}P_{w,t} = 0$. Thus we have an $k$-distance coloring $\{P_{v,s} \, : \, s \in [c], v \in V\}$.
\end{proof}

 \medskip

\section{Inertial-type bounds for the quantum distance-$k$ chromatic number} \label{sec:inirtia}

\subsection{First inertial-type bound}

The following bound on $\chi_{kq}$ extends Theorem \ref{thm:1st_inertial_chi} to its quantum counterpart:

\begin{theorem}[First inertial-type  bound]\label{thmquantum:1st_inertial_chi}
        Let $G$ be a graph of order $n$ with adjacency matrix $A$ having eigenvalues $\lambda_1 \geq \cdots \geq \lambda_n$. Let $p \in \mathbb{R}_k[x]$ with corresponding parameters $W(p) := \max_{u \in V} \{(p(A))_{uu}\}$ and $w(p) := \min_{u \in V} \{(p(A))_{uu}\}$. Then, 
        \begin{equation*}
            \chi_{kq}(G) \geq \frac{n}{\min\{ \abs{\{i : p(\lambda_i) \geq w(p) \}},\abs{\{i : p(\lambda_i) \leq W(p) \}}\}}.
        \end{equation*}
    \end{theorem}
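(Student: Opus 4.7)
The plan is to reuse the block-diagonal projectors from Theorem \ref{thm:coltopinch} and apply Cauchy eigenvalue interlacing to compressions of $p(A)\otimes I_d$, rather than trying to route through a quantum $k$-distance independence number (which is how the classical bound is obtained but which is not readily available in the quantum setting).

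Starting from any quantum $k$-distance $c$-coloring $\{P_{v,s}\}$ of $G$, I form the block-diagonal projectors $P_s := \sum_{v\in V} e_v e_v^{\dag}\otimes P_{v,s}$ for $s\in[c]$, and the key computation is to expand
\[
P_s(p(A)\otimes I_d)P_s \;=\; \sum_{v,w\in V} (p(A))_{vw}\, e_v e_w^{\dag}\otimes P_{v,s}P_{w,s}
\]
and show that every off-diagonal term vanishes: if $v\neq w$ and $d(v,w)\leq k$ then $P_{v,s}P_{w,s}=0$ by the distance-$k$ orthogonality of the coloring, while if $v\neq w$ and $d(v,w)>k$ then $(p(A))_{vw}=\sum_{\ell=0}^k a_\ell (A^\ell)_{vw}=0$ since no walk of length $\leq k$ can connect $v$ and $w$. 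Only the $v=w$ terms survive, giving
\[
P_s(p(A)\otimes I_d)P_s \;=\; \sum_{v\in V} (p(A))_{vv}\,\bigl(e_v e_v^{\dag}\otimes P_{v,s}\bigr),
\]
which is an orthogonal sum of scaled projectors. Consequently the compression of $p(A)\otimes I_d$ to $\operatorname{range}(P_s)$ is Hermitian of dimension $r_s:=\rank(P_s)$ with all eigenvalues contained in $[w(p),W(p)]$.

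With this in hand, since $\sum_{s} r_s = \rank(I_{nd}) = nd$, pigeonhole yields some $s^\ast$ with $r_{s^\ast}\geq nd/c$. I then apply Cauchy interlacing between this rank-$r_{s^\ast}$ compression and the full matrix $p(A)\otimes I_d$, whose spectrum is $\{p(\lambda_i)\}_{i=1}^{n}$ each with multiplicity $d$. The upper bound $\mu_i\leq W(p)$ on the compression forces the bottom $r_{s^\ast}$ eigenvalues of $p(A)\otimes I_d$ to be at most $W(p)$, and the lower bound $\mu_i\geq w(p)$ forces the top $r_{s^\ast}$ to be at least $w(p)$. Unpacking multiplicities, this yields $d\,|\{i:p(\lambda_i)\leq W(p)\}|\geq r_{s^\ast}$ and $d\,|\{i:p(\lambda_i)\geq w(p)\}|\geq r_{s^\ast}$. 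Combining with $r_{s^\ast}\geq nd/c$ and taking $c=\chi_{kq}(G)$ gives the claimed inequality.

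The main obstacle is arranging the off-diagonal cancellation cleanly: it requires the distance-$k$ orthogonality from the quantum coloring \emph{and} the degree constraint $\deg p\leq k$ to be used in tandem, and one must remember that the orthogonality in Definition \ref{def:quantumdistancechromaticnumber} is only imposed for distinct vertices, so the $v=w$ contribution is genuinely $\sum_v (p(A))_{vv}(e_v e_v^{\dag}\otimes P_{v,s})$ rather than $0$. Once that step is in place, the pigeonhole argument and Cauchy interlacing are routine.
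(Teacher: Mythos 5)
Your proof is correct, but it takes a genuinely different route from the paper. The paper's entire proof is a two-line citation: it invokes the inequality $\chi_{kq}(G)\geq n/\alpha_{kq}(G)$ together with the spectral upper bound on the quantum $k$-independence number from \cite[Theorem 3.3]{WEA2022}, and does not open up any projectors at all. You instead give a direct, self-contained argument: you compute $P_s(p(A)\otimes I_d)P_s=\sum_v (p(A))_{vv}\,(e_ve_v^{\dag}\otimes P_{v,s})$ using the distance-$k$ orthogonality for $d(v,w)\leq k$ and the vanishing of $(p(A))_{vw}$ for $d(v,w)>k$ (this is the same cancellation mechanism as in Theorem \ref{thm:coltopinch}, applied to $p(A)$ rather than to the individual powers $A^{\ell}$), then pick a color class $s^{\ast}$ with $\rank(P_{s^{\ast}})\geq nd/c$ by pigeonhole and apply Cauchy interlacing to the compression of $p(A)\otimes I_d$ onto $\operatorname{range}(P_{s^{\ast}})$, whose spectrum lies in $[w(p),W(p)]$. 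The interlacing step is applied in the right direction on both ends, and the multiplicity-$d$ bookkeeping for $p(A)\otimes I_d$ is handled correctly. In effect you have fused the proof of the quantum $k$-independence bound with the pigeonhole argument behind $\chi_{kq}\geq n/\alpha_{kq}$ into a single interlacing argument; what this buys is a proof that does not depend on defining or bounding $\alpha_{kq}$ at all, at the cost of redoing work that the paper outsources to \cite{WEA2022}. One small presentational point: in the interlacing paragraph you reuse the symbol $\mu_i$ both for the eigenvalues of the compression and for those of $p(A)\otimes I_d$; keep these notationally distinct when writing it up.
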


\begin{proof}
    The result follows immediately using the fact that $\chi_{kq} \geq \frac{n}{\alpha_{kq}}$ and \cite[Theorem 3.3]{WEA2022}.
\end{proof}

Details on the optimization of this bound can be found in \cite{acfns20}. As a corollary of Theorem \ref{thmquantum:1st_inertial_chi}, for $k=1$ we obtain \cite[Theorem 3.1]{acf2019}.

Using the extension of the inertial-type quantum bound for $\alpha_{kq}$ from \cite[Section 5]{GGL}, one obtains:

\begin{corollary}\label{coro:firstinertialinifinitedimprojectors}
    The bound from Theorem \ref{thmquantum:1st_inertial_chi} still holds when considering infinite dimensional projectors.
\end{corollary}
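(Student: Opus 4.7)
The plan is to rerun the one-line proof of Theorem \ref{thmquantum:1st_inertial_chi} verbatim, replacing the finite-dimensional inertial bound on $\alpha_{kq}$ from \cite[Theorem 3.3]{WEA2022} by its infinite-dimensional counterpart established in \cite[Section 5]{GGL}. The only ingredient in that short proof that requires separate justification is the ratio inequality $\chi_{kq}(G) \geq n/\alpha_{kq}(G)$, since I need to check that this pigeonhole-style reduction is dimension-insensitive; everything else is purely algebraic and carries over without change.

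To verify this, I would argue as follows. Let $\{P_{v,h} : v \in V,\ h \in [c]\}$ be a quantum $k$-distance $c$-coloring of $G$ by (possibly infinite-dimensional) projectors on a Hilbert space $\mathcal{H}$, and for each color $h \in [c]$ set $S_h := \{v \in V : P_{v,h} \neq 0\}$. The orthogonality condition \eqref{defsecondcondition} then exhibits $\{P_{v,h}\}_{v \in S_h}$ as a family witnessing that $S_h$ is a quantum $k$-independent set in the infinite-dimensional sense of \cite{GGL}, hence $|S_h| \leq \alpha_{kq}(G)$. The completeness condition \eqref{deffirstcondition} forces every $v \in V$ to belong to at least one $S_h$, since otherwise $\sum_{h \in [c]} P_{v,h} = 0 \neq I$. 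Summing yields $n \leq \sum_{h \in [c]} |S_h| \leq c \cdot \alpha_{kq}(G)$, and taking $c = \chi_{kq}(G)$ gives the ratio bound.

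Once the ratio bound is in hand, combining it with the infinite-dimensional inertial-type bound on $\alpha_{kq}(G)$ from \cite[Section 5]{GGL} immediately produces the stated inequality for $\chi_{kq}(G)$. The main obstacle has already been dealt with in \cite{GGL}, where the pinching-and-rank argument behind the $\alpha_{kq}$ bound is extended to separable Hilbert spaces; the contribution of this proof is simply to record that the reduction from $\chi_{kq}$ to $\alpha_{kq}$ is independent of the dimension of the underlying projectors, so that no further analytical input is needed to promote Theorem \ref{thmquantum:1st_inertial_chi} to the infinite-dimensional setting.
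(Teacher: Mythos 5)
Your top-level strategy is the same as the paper's: the paper offers no proof of this corollary beyond the remark that one combines the dimension-independent reduction from $\chi_{kq}$ to $\alpha_{kq}$ with the infinite-dimensional (tracial) extension of the inertia-type bound on $\alpha_{kq}$ from \cite[Section 5]{GGL}. You correctly identify that the only step needing scrutiny is the ratio inequality. However, your justification of that inequality contains a false step. You claim that $S_h := \{v \in V : P_{v,h} \neq 0\}$ satisfies $|S_h| \leq \alpha_{kq}(G)$ because the projectors indexed by $S_h$ are pairwise orthogonal at distance at most $k$. This is not true, and it already fails in finite dimension: for $G = K_2$ with $d = 2$, take $P_{1,1} = e_1e_1^{\dag}$, $P_{1,2} = e_2e_2^{\dag}$, $P_{2,1} = e_2e_2^{\dag}$, $P_{2,2} = e_1e_1^{\dag}$. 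This is a valid quantum $2$-coloring, yet $S_1 = \{1,2\}$ has size $2 > 1 = \alpha_q(K_2)$. The point is that a quantum coloring may assign nonzero, mutually orthogonal projectors to \emph{adjacent} vertices within the same color class, so the support set $S_h$ is not an independent set in any sense, classical or quantum; the quantum independence number is defined through a projector system with resolutions of the identity indexed by $[t]$, and a family of pairwise orthogonal projectors on $S_h$ does not produce one of size $|S_h|$.

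The correct dimension-insensitive reduction is rank-weighted rather than cardinality-weighted. For each fixed $v$, the projectors $P_{v,h}$, $h \in [c]$, are mutually orthogonal and sum to $I_d$, so $\sum_{h}\operatorname{rank}P_{v,h} = d$; summing over $v$ and averaging over $h$ shows that some color class $h$ satisfies $\frac{1}{d}\sum_{v}\operatorname{rank}P_{v,h} \geq n/c$, and this normalized-rank value of a projective packing of $G^k$ is precisely the quantity that the inertia-type bound of \cite{WEA2022} upper-bounds, with rank replaced by a normalized trace in the tracial/infinite-dimensional setting of \cite{GGL}. In my $K_2$ example this gives $\frac{1}{2}(1+1)=1$, consistent with the bound, whereas your $|S_h|$ count gives $2$. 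So the conclusion of the corollary is fine and the route through \cite{GGL} is the intended one, but the pigeonhole argument as you wrote it proves a false intermediate lemma and must be replaced by the rank/trace-counting version.
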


The bound from Theorem \ref{thmquantum:1st_inertial_chi} holds with equality for several graph classes, see \cite[Section 2]{acfns20}.

\subsection{Second inertial-type bound}

Next we show that the inertial bound from Theorem \ref{theo:extended-EW} is also a lower bound for quantum $k$-distance chromatic number $\chi_{kq}(G)$ in the case when $G$ is $k$-partially walk-regular. This, in turn, extends the bound for $\chi_k$ from \cite[Theorem 4.2]{acfns20} to its quantum counterpart.

Before stating the following results, note that for all $p \in \RR[x]$ we have that $p(A \otimes I) = p(A) \otimes I$. This follows directly from properties of the Kronecker product.

\begin{theorem}[Second-inertial type bound]	\label{thmquantum:2nd_inertial_chi}
Let $G$ be a $k$-partially walk-regular graph with adjacency  eigenvalues $\lambda_1\geq \cdots \geq \lambda_n$. Let $p\in \mathbb{R}_k[x]$. Then,  
\begin{equation}\label{quantumextendedElphickWocjan}
	\chi_{kq}\geq 1+\max{ \left(\frac{|j:p(\lambda_j)< 0|}{|j:p(\lambda_j)> 0|}\right)}.
\end{equation}
\end{theorem}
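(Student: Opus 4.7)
My plan is to adapt the pinching argument of Elphick and Wocjan \cite{elphick19} to the distance-$k$ setting, using the bridge provided by Theorem~\ref{thm:coltopinch}.

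To set up, I would let $c=\chi_{kq}(G)$, fix an optimal quantum $k$-distance $c$-coloring $\{P_{v,s}\}$ in $\mathbb{C}^{d\times d}$, and form the block projectors $P_s=\sum_{v\in V}e_v e_v^{\dagger}\otimes P_{v,s}$ whose collection I denote $\mathcal{P}$. By $k$-partial walk-regularity, the diagonal entries of $p(A)$ all coincide with the common scalar $\bar p := \tfrac{1}{n}\sum_i p(\lambda_i)$, so this is a well-defined number depending only on $p$ and the spectrum of $G$.

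The central step is to establish the pinching identity
\[
\mathcal{C}_{\mathcal{P}}\bigl((p(A)-\bar p\, I_n)\otimes I_d\bigr)=0.
\]
This would be verified by splitting the $(v,w)$-sum in $\mathcal{C}_{\mathcal{P}}$ into three cases: for $v=w$ the diagonal contribution $\bar p \cdot e_v e_v^{\dagger}\otimes I_d$ (arising from $\sum_s P_{v,s}=I_d$) is precisely cancelled by the subtracted $\bar p\, I_n$; for $v\neq w$ with $d(v,w)\le k$ the orthogonality $P_{v,s}P_{w,s}=0$ kills the contribution; and for $v\neq w$ with $d(v,w)>k$ the entry $(p(A))_{vw}$ itself vanishes since $\deg p\le k<d(v,w)$. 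This is essentially the calculation in the proof of Theorem~\ref{thm:coltopinch}, supplemented by the diagonal bookkeeping that $k$-partial walk-regularity makes possible.

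Next I would invoke the inertia-from-pinching statement underlying \cite[Theorem~1]{elphick19}: if a Hermitian matrix $H$ is annihilated by a pinching $\mathcal{C}_{\mathcal{P}}$ whose $\mathcal{P}$ consists of $c$ orthogonal projectors summing to the identity, then $c\ge 1+\max\bigl(n^+(H)/n^-(H),\, n^-(H)/n^+(H)\bigr)$. Applied to $H=(p(A)-\bar p I_n)\otimes I_d$, whose eigenvalues are $p(\lambda_i)-\bar p$ each with multiplicity $d$, the factor $d$ cancels in the ratio and yields the desired bound with the strict conditions $p(\lambda_j)\gtrless \bar p$.

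To reach the stated form (with $\gtrless 0$) and simultaneously justify the remark, I would observe that the argument is valid for every $p\in\mathbb{R}_k[x]$; replacing $p$ by $p-\bar p$ gives another element of $\mathbb{R}_k[x]$ for which the analogous $\bar p$ vanishes, so quantification over all polynomials already absorbs this constant shift. This explains why the constraint $\sum_i p(\lambda_i)=0$ in the original formulation of Theorem~\ref{theo:extended-EW} is redundant: it is automatically ensured by the freedom to translate $p$ by a constant.

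The step that I expect to require the most care is the pinching identity itself: unlike the $k=1$ case treated in \cite{elphick19}, here $(A^\ell)_{vv}$ is typically nonzero for $\ell\ge 2$, so the diagonal terms cannot be ignored, and it is precisely the $k$-partial walk-regularity hypothesis that collapses them into a single scalar $\bar p$ that can be absorbed as a shift of the identity, enabling the EW pinching lemma to be applied verbatim.
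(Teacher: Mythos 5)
Your proof is correct and follows the same overall strategy as the paper --- convert the quantum $k$-distance coloring into a pinching via Theorem~\ref{thm:coltopinch} and then run the Elphick--Wocjan inertia argument --- but it diverges on exactly the point you flag as delicate, and there your version is the more careful one. The paper's proof asserts $\mathcal{C}_{\mathcal{P}}(p(A)\otimes I)=0$ for every $p\in\mathbb{R}_k[x]$ ``by Theorem~\ref{thm:coltopinch}'' and then reruns the rank argument of \cite{elphick17,elphick19} explicitly (splitting $p(A)\otimes I_d$ into positive and negative parts $p(B)-p(C)$, compressing by $P^-$, and comparing ranks of positive semidefinite matrices). But a pinching fixes diagonal matrices rather than annihilating them, and for $\ell\ge 2$ the diagonal of $A^{\ell}$ is nonzero, so only the off-diagonal part of $A^{\ell}\otimes I_d$ is killed; under $k$-partial walk-regularity the surviving diagonal of $\mathcal{C}_{\mathcal{P}}(p(A)\otimes I_d)$ is $\bar p\, I_n\otimes I_d$ with $\bar p=\frac{1}{n}\operatorname{tr}p(A)$. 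Your identity $\mathcal{C}_{\mathcal{P}}\bigl((p(A)-\bar p I_n)\otimes I_d\bigr)=0$ is the correct statement and is exactly what legitimizes the inertia step; the paper's equation $-p(A)\otimes I=\sum_{s=1}^{c-1}U^{s}(p(A)\otimes I)(U^{\dag})^{s}$ holds only when $\operatorname{tr}p(A)=0$. Your other deviation --- quoting the ``pinching annihilates $H$ implies $c\ge 1+\max(n^+(H)/n^-(H),\,n^-(H)/n^+(H))$'' statement as a black box instead of redoing the $P^{\pm}$/rank computation --- is harmless, since that computation is precisely what the paper transcribes from \cite{elphick19}.

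One point to make explicit in a final write-up: what you actually prove is the bound with eigenvalue counts taken relative to $\bar p$, i.e.\ $|\{j:p(\lambda_j)\gtrless\bar p\}|$, which coincides with the displayed inequality~\eqref{quantumextendedElphickWocjan} only after replacing $p$ by $p-\bar p$. That is the right reading of the ``redundancy'' of the trace condition (it can be imposed without loss of generality by translating $p$), and it recovers the original \cite[Theorem 4.2]{acfns20}. The literal inequality for an untranslated $p$ is not what your argument yields --- nor should it be, since for $p(x)=-x^{2}$ every $p(\lambda_j)$ is nonpositive and the right-hand side of \eqref{quantumextendedElphickWocjan} has a zero denominator. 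So state the conclusion for the trace-normalized polynomial; with that caveat your proof is complete.
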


\begin{proof}
Let $\{P_{v,s} \, : \, v \in V, s \in [c]\}$ be a quantum $k$-distance coloring of $G$ and $\P = \{P_1, \ldots, P_c\}$ be the orthogonal projection as defined in Theorem \ref{thm:coltopinch}. Let $\omega = e^{2\pi i/c}$ and define 
$$U := \sum_{s \in [c]} \omega^s P_s.$$
It is easy to show that $U$ is a unitary matrix, and hence $U^{\ell}$ is unitary for all $\ell \in \NN$. It was shown in \cite[Lemma 1]{elphick19} that for any $X \in \CC^{nd \times nd}$, 
$$\mC_{\P}(X) = \frac{1}{c}\sum_{s \in [c]} U^{\ell} X (U^{\dag})^{\ell}.$$
Hence by Theorem \ref{thm:coltopinch}, for all $p \in \RR_k[x]$ we have 
$$-p(A) \otimes I = \sum_{s =1}^{c-1} U^{\ell} (p(A) \otimes I)(U^{\dag})^{\ell}.$$

Let $v_1, \ldots, v_n$ be the eigenvectors of unit length corresponding to the eigenvalues $p(\lambda_1) \geq \cdots \geq p(\lambda_n)$ of $p(A)$. Note that $p(A) \otimes I_d$ has for eigenvalue $p(\lambda_i)$ with multiplicity $d$ and this for all $1 \leq i \leq n$.
Furthermore, $\{v_i \otimes e_j \, :, 1 \leq j \leq d\}$, for $1 \leq i \leq n$ are eigenvectors with eigenvalue $p(\lambda_i)$.
Let $p(A) \otimes I_d = p(B) - p(C)$, where
\begin{align*}
    p(B) &= \sum_{i=1}^{|\{j \, : \, p(\lambda_j) > 0\}|}\sum_{\ell=1}^d p(\lambda_i)(v_i \otimes e_{\ell})(v_i \otimes e_{\ell})^*\\ 
    p(C) &= \sum_{i=n - |\{j \, : \, p(\lambda_j) > 0\}|+1}^{n}\sum_{\ell=1}^d - p(\lambda_i)(v_i \otimes e_{\ell})(v_i \otimes e_\ell)^*
\end{align*}

Note $p(B)$ and $p(C)$ positive semidefinite matrices, and we know $\rank(p(B)) = d|\{j \, : \, p(\lambda_j) > 0\}|$ and $\rank(p(C)) = d|\{j \, 
: \, p(\lambda_j) < 0\}|$.  

Let $P^+$ and $P^-$ be the orthogonal projections onto the subspaces spanned by the eigenvectors corresponding to the positive and negative eigenvalues of $p(A) \otimes I_d$, that is:

\begin{align*}
   P^+ &=  \sum_{i=1}^{|\{j \, : \, p(\lambda_j) > 0\}|}\sum_{\ell=1}^d (v_i \otimes e_{\ell})(v_i \otimes e_{\ell})^* \\
   P^- &= \sum_{i=n - |\{j \, : \, p(\lambda_j) > 0\}|+1}^{n}\sum_{\ell=1}^d (v_i \otimes e_{\ell})(v_i \otimes e_\ell)^*.
\end{align*} 
Then 
$$p(B) = P^+(p(A) \otimes I_d) P^+ \quad \textrm{and} \quad p(C) = -P^-(p(A) \otimes I_d) P^-,$$
and therefore we get 
$$\sum_{i=1}^c U_i p(B) U_i^{\dag} -\sum_{i=1}^c U_i p(C) U_i^{\dag} = p(C) - p(B).$$

Multiplying both sides by $P^-$ we get 

$$P^-\sum_{i=1}^c U_i p(B) U_i^{\dag}P^- - P^-\sum_{i=1}^c U_i p(C) U_i^{\dag}P^- = p(C)$$

Since $P^-\sum_{i=1}^c U_i p(C) U_i^{\dag}P^-$ is positive semi-definite, we obtain 

$$P^-\sum_{i=1}^c U_i p(B) U_i^{\dag}P^- \geq p(C).$$

Playing with ranks of sums and rank of products together with \cite[Lemma 2]{elphick17}, we get that
\begin{align*}
&(c-1)d|\{j \, : \, p(\lambda_j) > 0\}| \geq d|\{j \, : \, p(\lambda_j) < 0\}|.
\qedhere\end{align*}
\end{proof}

Note the above bound can be optimized by finding a suitable polynomial $p$. Although a closed formula for optimal polynomials is unknown in most cases, in \cite{ANR2025} a linear program that optimizes \eqref{quantumextendedElphickWocjan} was proposed. Furthermore, as an immediate corollary of Theorem \ref{thmquantum:2nd_inertial_chi}, for $k = 1$, we get that  \cite[Theorem 4.2]{acfns20}. Finally the second inertia bound for the classical $k$-distance chromatic number is known to be tight for certain classes of graph such as the (generalized) Petersen graphs with $(n,k) \in \{(5, 2),(8, 3),(10, 2)\}$ (see \cite[Section 3]{acfns20}). Hence for those graphs and specified values of $k$, the quantum $k$-distance chromatic is equal to the $k$-distance chromatic number.



 \medskip

\section{Hoffman ratio-type bound for the quantum distance-$k$ chromatic number} \label{sec:hoffman}

The authors from \cite{elphick19} prove several Hoffman ratio-type bounds (see \cite[Eq. (3)]{elphick19}), like the well-known Hoffman bound on the chromatic number of a graph, $\chi(G)\geq 1-\frac{\lambda_1}{\lambda_n}$, hold for the quantum chromatic number of a graph as well. 

In this section we show that Hoffman ratio-type bounds for the $k$-distance chromatic number also hold for its quantum counterpart. 
We will do so through the intermediate of quantum homomorhpisms and the Lov\'asz theta number of a graph.
As we will only need a few properties of the latter concepts to prove our bound, we refer the reader to the work of Man\u{c}inska and Roberson \cite{mancinska162}, for precise definitions. 
We write $G \xrightarrow q H$ if there exist a quantum homomorphism from $G$ to $H$. Furthermore we denote by $\overline{\vartheta}(G)$ to be the Lov\'asz theta number of the complement of $G$.

We will need the following theorem.

\begin{theorem}\cite[Theorem 3.2]{mancinska162} \label{thm:quantumboundtheta}
If $G \xrightarrow q H$ then $\overline{\vartheta}(G) \leq \overline{\vartheta}(H)$.
\end{theorem}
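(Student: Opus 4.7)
My plan is to reduce the claim to an SDP monotonicity statement. The Lov\'asz theta of the complement admits the formulation
\[
\overline{\vartheta}(\Gamma) \,=\, \max\bigl\{\,\langle J, M\rangle : M\succeq 0,\ \tr(M)=1,\ M_{uv}=0 \text{ for distinct non-adjacent } u,v \in V(\Gamma)\,\bigr\},
\]
so it will suffice to push any feasible $M \in \CC^{V(G)\times V(G)}$ forward to a feasible $M'\in \CC^{V(H)\times V(H)}$ with the same objective value, using the quantum homomorphism. The classical prototype is the pushforward $M'_{yy'} := \sum_{x\in f^{-1}(y),\,x'\in f^{-1}(y')} M_{xx'}$ associated with a graph homomorphism $f : G \to H$, and the task is to find a quantum analogue.

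Given a quantum homomorphism $\{E_{xy}\}\subset \CC^{d\times d}$, so that each family $\{E_{xy}\}_{y\in V(H)}$ is a projective measurement and $E_{xy}E_{x'y'}=0$ whenever $xx'\in E(G)$ and $yy'\notin E(H)$, I will try the candidate
\[
M'_{yy'} \,:=\, \frac{1}{d}\sum_{x,x'\in V(G)} M_{xx'}\,\tr(E_{xy}E_{x'y'}).
\]
Checking that this works should split into four short verifications. First, $M'$ is PSD: factoring $M = X^{*}X$, one recognises $M'$ as the Gram matrix of the vectors $\xi_y := \frac{1}{\sqrt d}\sum_v x_v\otimes \mathrm{vec}(E_{vy})$. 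Second, $\tr(M')=1$: summing over $y$ gives $\sum_y \tr(E_{xy}E_{x'y}) = \tr\bigl(\sum_y E_{xy}E_{x'y}\bigr)$, which equals $d$ when $x=x'$, is weighted by $M_{xx'}=0$ for distinct non-edges of $G$, and vanishes termwise on $xx'\in E(G)$ because $yy$ is a non-edge of the loopless $H$. Third, the zero pattern on distinct non-edges of $H$: for such $(y,y')$, the $x=x'$ contributions die by measurement orthogonality $E_{xy}E_{xy'}=0$, while the $xx'\in E(G)$ contributions die by the defining relation of the quantum homomorphism. Fourth, the objective is preserved: two applications of completeness give $\sum_{y,y'}\tr(E_{xy}E_{x'y'}) = \tr(I)=d$, so $\langle J,M'\rangle = \langle J,M\rangle$.

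The hardest part will be landing on the correct form of the push-forward. More obvious candidates, such as averaged conjugations $\sum_y E_{xy}(\cdot)E_{xy}$ or direct sums of projectors, either destroy positive semidefiniteness or break the trace constraint. It is precisely the Hilbert--Schmidt pairing $\tr(E_{xy}E_{x'y'})$, together with the $1/d$ normalisation, that couples the Gram-matrix factorisation needed for PSD-ness with the completeness identity needed for objective preservation; once that bilinear weight is in hand the remaining verifications separate cleanly along the two defining relations of a quantum homomorphism, with the loopless assumption on $H$ entering only through the diagonal terms.
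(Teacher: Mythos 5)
The paper does not prove this statement at all---it is imported verbatim from Man\v{c}inska and Roberson---so there is no internal argument to compare against; your proof is correct and is essentially the proof in the cited source: push a feasible solution of the SDP for $\overline{\vartheta}(G)$ forward through the Hilbert--Schmidt weights $\tr(E_{xy}E_{x'y'})$, which realizes $M'$ as the Gram matrix of the vectors $\xi_y=\tfrac{1}{\sqrt d}\sum_v x_v\otimes\mathrm{vec}(E_{vy})$. All four verifications (positive semidefiniteness, unit trace, the zero pattern on non-edges of $H$, and preservation of $\langle J,\cdot\rangle$) are sound, including the two points where one must be careful: the looplessness of $H$ kills the $xx'\in E(G)$ terms in the trace computation, and projective-measurement orthogonality kills the diagonal terms in the zero-pattern check.
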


The quantum chromatic number can be determined using quantum homomorphism in the following way (see \cite[Section 4]{mancinska162} for more details) : $$\chi_q(G) = \min \{n \in \NN \, : \, G \xrightarrow q K_n\},$$ 
where $K_n$ is the complete graph on $n$ vertices. Furthermore recall that $\overline{\vartheta}(K_n) = n$.

We are now ready to prove the Hoffman ratio-type bound for quantum $k$-distance chromatic number. 

\begin{theorem}[Hoffman ratio-type bound]\label{thm:ratiotype}
    Let $G$ be a graph on $n$ vertices with adjacency matrix $A$ having eigenvalues $\lambda_1 \geq \cdots \geq \lambda_n$. Let $p \in \RR_k[x]$ such that $p(\lambda_1) > p(\lambda_i)$ for all $i \in [2,n]$. Then,
    \begin{equation}\label{eq:qtmratiotype}
        \chi_{kq}(G) \geq \frac{p(\lambda_1) - \lambda(p)}{W(p) - \lambda(p)},
    \end{equation}
    where $W(p) := \max_{u \in V} \{ (p(A))_{uu}\}$ and $\lambda(p) := \min_{i \in [2,n]} \{p(\lambda_i)\}$.
\end{theorem}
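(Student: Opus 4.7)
The strategy is to reduce the bound on $\chi_{kq}(G)$ to a spectral lower bound on the Lov\'asz theta of the complement of the power graph $G^k$, mirroring Elphick and Wocjan's treatment of the $k=1$ case. First, by \eqref{rmk:kqcoloringandpowergraph} we have $\chi_{kq}(G) = \chi_q(G^k)$, and the homomorphism characterization $\chi_q(G^k) = \min\{n : G^k \xrightarrow q K_n\}$ produces a quantum homomorphism $G^k \xrightarrow q K_{\chi_{kq}(G)}$. Applying Theorem~\ref{thm:quantumboundtheta} together with $\overline{\vartheta}(K_n) = n$ then yields $\chi_{kq}(G) \geq \overline{\vartheta}(G^k)$, so it suffices to prove
\[
\overline{\vartheta}(G^k) \geq \frac{p(\lambda_1) - \lambda(p)}{W(p) - \lambda(p)}.
\]

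The key observation driving the second step is that, because $\deg p \leq k$ and $(A^{\ell})_{uv}$ counts walks of length $\ell$ in $G$, we have $(p(A))_{uv} = 0$ whenever $u \neq v$ and $d_G(u,v) > k$. Hence, off the diagonal, $p(A)$ is supported on the edges of $G^k$ and can act as a weighted adjacency-type witness for $\overline{\vartheta}(G^k)$. The hypothesis $p(\lambda_1) > p(\lambda_i)$ for $i \in [2,n]$ moreover forces $\lambda_{\max}(p(A)) = p(\lambda_1)$ and $\lambda_{\min}(p(A)) = \lambda(p)$, which in turn makes $p(A) - \lambda(p) I$ positive semidefinite. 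A natural attempt is to feed the normalised matrix $X := (p(A) - \lambda(p)I)/\tr(p(A) - \lambda(p)I)$ into the primal SDP formulation
\[
\overline{\vartheta}(G^k) = \max\bigl\{\langle J, X\rangle : X \succeq 0,\ \tr(X)=1,\ X_{uv} = 0 \text{ for } u \neq v,\ uv \notin E(G^k)\bigr\},
\]
for which $X$ is visibly feasible; combining the upper estimate $\tr(p(A) - \lambda(p)I) \leq n(W(p) - \lambda(p))$ with a lower estimate of $\langle J, X\rangle$ should then deliver \eqref{eq:qtmratiotype}.

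The main obstacle is controlling $\langle J, X\rangle = \mathbf{1}^\top (p(A) - \lambda(p)I)\mathbf{1}/\tr(p(A) - \lambda(p)I)$, specifically the numerator $\mathbf{1}^\top p(A)\mathbf{1}$. When $G$ is regular, $\mathbf{1}$ is an eigenvector of $A$ and $\mathbf{1}^\top p(A)\mathbf{1} = n\,p(\lambda_1)$, yielding the required lower bound immediately; but in general $\mathbf{1}$ decomposes nontrivially across every eigenspace of $A$, so a more careful argument (or a different SDP witness) is needed. I expect the same device used in the proof of the classical Theorem~\ref{thm:ratio_chi} in \cite{acf2019, acfns20} to transfer verbatim, since the lower bound established there is really a lower bound on $\overline{\vartheta}(G^k)$ rather than merely on $\chi_k(G)$. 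The upshot is that only the reduction $\chi_{kq}(G) \geq \overline{\vartheta}(G^k)$ is genuinely quantum, while the spectral content is inherited unchanged from the classical argument.
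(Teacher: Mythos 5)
Your proof follows the same route as the paper's: reduce via $\chi_{kq}(G)=\chi_q(G^k)$, the homomorphism characterization of $\chi_q$, and Theorem~\ref{thm:quantumboundtheta} with $\overline{\vartheta}(K_c)=c$ to get $\chi_{kq}(G)\geq\overline{\vartheta}(G^k)$, and then invoke the classical inequality $\frac{p(\lambda_1)-\lambda(p)}{W(p)-\lambda(p)}\leq\overline{\vartheta}(G^k)$ from \cite{acf2019}. Your intermediate SDP-witness sketch is not needed (and, as you yourself note, only closes for regular graphs), but since you ultimately rest the spectral step on the cited classical bound, exactly as the paper does, the argument is correct and essentially identical.
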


\begin{proof}
    Let $c := \chi_{kq}(G)$. By \eqref{rmk:kqcoloringandpowergraph}, we also have $c = \chi_q(G^{k})$. By definition of the quantum chromatic number (see Definition \ref{def:quantumdistancechromaticnumber} for $k=1$) this implies $G^{k} \xrightarrow q K_c$ and hence, by Theorem \ref{thm:quantumboundtheta}, $\overline{\vartheta}(G^{k}) \leq \overline{\vartheta}(K_c) = c = \chi_{kq}(G)$.
    
    Finally it is known (see \cite{acf2019}) that 
    $$\frac{p(\lambda_1) - \lambda(p)}{W(p) - \lambda(p)} \leq \overline{\vartheta}(G^{k}).$$
    Combined with the above this proves the desired inequality.    
\end{proof}

Similarly to the second inertia bound, one needs to establish the polynomial $p \in \RR_K[x]$ that optimizes \eqref{eq:qtmratiotype}. For $k=2,3$, closed formulas with the optimal polynomials are derived in \cite[Section 2.2]{ANR2025}. For all other $k$, the optimal polynomials are still unknown. However, the authors of \cite{ANR2025} establish a linear program that can be used to optimize \eqref{eq:qtmratiotype}. 


Finally, as an immediate corollary we obtain for $k=1$ \cite[Theorem 4.3]{acfns20}.

 \medskip

\section{Concluding remarks}

We showed that three classical eigenvalue lower bounds for the \linebreak distance-$k$ chromatic number are also lower bounds for the corresponding quantum parameter. The quality of such bounds depends on the choice of a polynomial, so finding the best possible upper bound for a given graph is in fact an optimization problem. Since such optimization been studied for the classical case (see \cite{acfns20,ANR2025}), as a consequence of our results, now one can use the existing optimization methods to compute the best eigenvalue bound for the quantum distance chromatic number. This allows us to obtain several graph classes for which $\chi_{k}(G) =\chi_{kq}(G)$, thus increasing the number of graphs for which the quantum parameter is known. 

Indeed, since we know that $\chi_{kq}\leq \chi_k$, then an immediate consequence of the new Theorems \ref{thm:ratio_chi}, \ref{thmquantum:2nd_inertial_chi} and \ref{thm:ratiotype} is that 
 $$\eqref{eq:ratio_chi},\eqref{extendedElphickWocjan}, \eqref{eq:qtmratiotype} \leq \chi_{kq} \leq \chi_k.$$ 
 
 This implies that we can use the existing optimization methods for the bounds on the left which appeared in \cite{acfns20} (for the two inertial-type bounds) and in \cite{ANR2025} (for the Hoffman ratio-type bound). See Appendix for more details. For instance, for $k>1$ and for \eqref{extendedElphickWocjan}, we can use the MILP (27) from  \cite{acfns20} to find $\chi_{kq}$ of graphs for which $\chi_k=\chi_{kq}=\eqref{extendedElphickWocjan}$. Actually, this optimization method gives the exact value of $\chi_{kq}$ for several families of graphs, such as for the Kneser graphs $\chi_2(K(p,2))=n(n-1)/2$. See \cite[Section 4.2.1]{acfns20} for further details on other graph classes for which now we can obtain the value of the quantum distance-$k$ chromatic number. We should note that that the mentioned optimization methods for the three eigenvalue bounds only plays a role when $k>1$, since for $k=1$ it corresponds with $p(x)=x$. 

Furthermore, several authors \cite{MR2016,L2024,GS2024} have been looking at separation type results for $\chi(G)$ and $\chi_q(G)$. In this regard, our eigenvalue bounds and their corresponding optimization methods can be used to obtain graphs that are not candidates to hold such separation.

 We end up this paper with several intriguing directions and questions. 
 \begin{itemize}
     \item If for $k=1$ the quantum and the classical distance chromatic parameters coincide, does it imply anything for larger $k$? 
     \item Several authors have looked at separation results between $\chi_q$ and $\chi$ (see e.g. \cite{GS2024,L2024,MR2016,man13}). Using the fact that $\chi_{kq}(G) = \chi_{q}(G^{k})$, and that a $k$-quantum coloring of $G$ is a quantum coloring of $G^{k}$ (and vice-versa), it is immediate that \cite[Theorem 12]{man13}
     holds for $\chi_{kq}(G)$ as well. However, it is unclear whether a separation in the quantum $k$-distance chromatic number implies a separation in the quantum $\ell$-distance chromatic number for all $\ell \leq k$. 
     
 \end{itemize}

\subsection*{Acknowledgments}
Aida Abiad is supported by the Dutch Research Council (NWO) through the grants VI.Vidi.213.085 and \linebreak OCENW.KLEIN.475. 
Benjamin Jany is supported by the Eindhoven Hendrik Casimir Institute (EHCI) of the Eindhoven University of Technology.


\newpage

\section{Appendix}
Since we showed that three eigenvalue bounds on the distance-$k$ chromatic number also hold in the quantum setting, one can use the existing optimization methods for such eigenvalue bounds in the classical setting. 

Indeed, the optimization of the First inertial-type bound (Theorem \ref{thmquantum:1st_inertial_chi}) and of the Second inertial-type bound (Theorem \ref{thmquantum:2nd_inertial_chi}) appeared in \cite{acfns20}. The optimization of the Hoffman ratio-type bound from Theorem \ref{thm:ratiotype} appeared in \cite{ANR2025}. Nevertheless, we decided to add them here for completeness.

\subsection{Optimization of the First inertial-type bound (Theorem \ref{thmquantum:1st_inertial_chi})}\label{subsec:opt-firstinertialtypebound}

Here we introduce a mixed-integer linear programming (MILP) formulation to compute the best polynomial for Theorem \ref{thmquantum:1st_inertial_chi}. Although solving a MILP is known to be NP-hard in general, we find that in practice our method effectively minimizes the bound in Theorem~\ref{thmquantum:1st_inertial_chi} for numerous interesting graphs. 

Let~$\G$ have spectrum~$\sp \G=\big\{\theta_0^{[m_0]},\ldots, \theta_d^{[m_d]}\big\}$. Theorem~\ref{thmquantum:1st_inertial_chi} can be written in terms of these distinct eigenvalues and multiplicities as
\begin{equation} \label{eq:ind1:inertia1alt} \alpha_k(\G)\le \min\left\{\sum_{j: p(\theta_j)\ge w(p)} m_j , \sum_{j: p(\theta_j)\le W(p)} m_j\right\}.
\end{equation}
Equation~\eqref{eq:ind1:inertia1alt} only requires the computation of $p(\theta_j)$ for~$j=0,1,\dots,d$, instead of $p(\lambda_j)$ for all~$j\in[n]$. We will therefore base our MILP on this alternative formulation to reduce the number of variables and constraints. Note that Equation~\eqref{eq:ind1:inertia1alt} is invariant under scaling and translation of polynomial~$p$. Upon changing the sign of~$p$, we can therefore always assume we are minimizing~$\sum_{j: p(\theta_j)\ge w(p)} m_j$. Moreover, a constant can be added to~$p(x)$ such that~$w(p) = 0$.

Let~$p(x) = a_k x^k +\cdots + a_0$,~$\vecb=(b_0,\ldots,b_d) \in \{0,1\}^{d+1}$~and~$\vecm=(m_0,\ldots,m_d)$. As $w(p) = 0$, there exist a vertex~$u\in V(\G)$ such that~$p(A)_{uu} = 0$. Moreover, every other vertex~$v$ must satisfy~$p(A)_{vv}\ge 0$. The following MILP, with variables~$a_0,\ldots,a_k$ and~$b_1,\ldots, b_d$, formulates the problem of finding the best polynomial for the bound in Equation~\eqref{eq:ind1:inertia1alt} under the assumption that~$w(p) = p(A)_{uu} = 0$. To obtain the best upper bound on~$\alpha_k$, we iterate over all vertices~$u \in V(\G)$, solve the corresponding MILP and find the lowest objective value of all.

\begin{equation}
\boxed{\def\arraystretch{1.3}
\begin{array}{rll}
{\tt minimize} & \vecm^{\top} \vecb &\\
{\tt subject\ to} & \sum_{i = 0}^k a_i \cdot(A^i)_{vv} \geq 0, &v \in V(\G)\setminus \{u\}\\
 & \sum_{i = 0}^k a_i\cdot (A^i)_{uu} = 0, &\\
 & \sum_{i = 0}^k a_i \theta_j^{\ i} - M b_j + \varepsilon \leq 0, &j = 0,\dots,d\quad (\ast)\\
 & \vecb \in \{0,1\}^{d+1}&
		\end{array}}
\label{MILP:ind1:inertia}
\end{equation}

The constant~$M$ in MILP formulation~\eqref{MILP:ind1:inertia} is a large number and~$\varepsilon > 0$ small. The value of each variable~$b_j$ represents whether~$p(\theta_j) \ge w(p) = 0$. Constraint~$(\ast)$ ensures that $b_j=1$ if~$p(\theta_j)\ge 0$, and since~$\vecm^\top \vecb$ is minimized in the objective function, $b_j=1$ only if~$p(\theta_j)\ge 0$. So, upon minimizing the weighted sum of~$b_j$'s, we are optimizing the corresponding bound~$\alpha_k\le \vecm^{\top} \vecb$. We will see a concrete example of this MILP formulation for a specific graph later on in this section, in Example~\ref{ex:ind1:inertia1}.
 
As mentioned earlier, Equation~\eqref{eq:ind1:inertia1alt} is invariant under the scaling of~$p$. This means that we can always set~$\varepsilon = 1$ without loss of generality. If the chosen~$M$ is not large enough, the MILP will be infeasible and we can repeat with a larger~$M$.

If~$\G$ is a~$k$-partially walk-regular graph, all powers~$A^i$ of the adjacency matrix have constant diagonal. This means that~$w(p)=0$ if and only if~$p(A)_{uu}=0$ for all~$u\in V(\G)$, and hence
\[\tr p(A) = \sum_{u\in V(\G)} p(A)_{uu} = \sum_{j = 0}^d m_j p(\theta_j)= 0.\]
For $k$-partially walk-regular graphs, MILP formulation~\eqref{MILP:ind1:inertia} can therefore be simplified by replacing the first two constraints by $\sum_{j = 0}^d m_j p(\theta_j)= 0$, which results in MILP~\eqref{MILP:ind1:inertia}. As the constraints of this new formulation no longer depend on a chosen vertex~$u$, it suffices to solve a single MILP instance, whereas for general graphs we needed to solve one for every vertex.

	\begin{equation}
        \boxed{\def\arraystretch{1.3}
		\begin{array}{rl}
		{\tt minimize} & \vecm^{\top} \vecb\\
        {\tt subject\ to} & \sum_{j = 0}^d m_j \sum_{i = 0}^k a_i \theta_j^{\ i}= 0\\
		& \sum_{i = 0}^k a_i \theta_j^{\ i} - Mb_j + \varepsilon \leq 0, \quad j = 0,\dots,d \\
		& \vecb \in \{0,1\}^{d+1}
		\end{array}}
        \label{MILP:ind1:inertiaWR}
    \end{equation}

\begin{example}
    Let~$G=C_6$ with~$\sp G = \big\{2^{[1]},1^{[2]},-1^{[2]},-2^{[1]}\big\}$. This graph is 2-partially walk-regular (i.e., regular), hence we can apply MILP~\eqref{MILP:ind1:inertiaWR} to compute an upper bound on~$\alpha_2(\G)$. The objective function of MILP~\eqref{MILP:ind1:inertiaWR} is~$b_0+2b_1+2b_2+b_3$, which we want to minimize under the constraints
    \begin{alignat*}{7}
      6&a_0 & + &  0 && a_1 &+& &12& a_2 && = 0 && \\
	     &a_0 & + & 2 &&a_1 &+&  &4& a_2 && -Mb_0 &&+\varepsilon\le 0\\
      &a_0 & + & &&a_1 &+& &&a_2 && -Mb_1 &&+\varepsilon\le 0\\
      &a_0 & - & &&a_1 &+& && a_2 && -Mb_2 &&+\varepsilon\le 0\\
      &a_0 & - & 2 &&a_1 &+&  &4& a_2 && -Mb_3 &&+\varepsilon\le 0.
	\end{alignat*}
    The first constraint simplifies to~$a_0+2a_2=0$. By substituting this into the other constraints, we find
    \begin{alignat*}{9}
	    2 &a_1 & + & 2 &&a_2 -Mb_0 &&+\varepsilon\le 0 \hspace{1.5cm} &-a_1 & - & &&a_2 -Mb_2 &&+\varepsilon\le 0&&\\
        &a_1 & - & &&a_2 -Mb_1 &&+\varepsilon\le 0 \hspace{1.5cm} & -2a_1 & + &2 &&a_2 -Mb_3 &&+\varepsilon\le 0&&,
	\end{alignat*}
  which can be rewritten as
  \[\frac{1}{2}(Mb_0 - \varepsilon) \ge a_2+a_1 \ge -Mb_2+\varepsilon,\hspace{0.5cm} Mb_1 - \varepsilon \ge -a_2+a_1 \ge -\frac{1}{2}(Mb_3+\varepsilon). \]
  These two sandwiching inequalities imply that~$b_0$ and~$b_2$ cannot be zero simultaneously, and neither can~$b_1$ and~$b_3$. Since~$m_0$ and~$m_3$ have lowest multiplicity, the best objective value is obtained by setting~$b_0=b_3=1$,~$b_1=b_2=0$. The resulting upper bound is for~$\alpha_2(\G)$ equals two, which is tight, since~$\G$ admits a 2-independent set of size two. Vector~$\veca=(0,\frac{1}{2}(M-\varepsilon),0)$ complies with this choice of~$\vecb$ and satisfies the constraints, hence~$p_2(x) = \frac{1}{2}(M-\varepsilon)x$ is a corresponding optimal polynomial. 
  \label{ex:ind1:inertia1}
\end{example}

\subsection{Optimization of the Second inertial-type bound (Theorem \ref{thmquantum:2nd_inertial_chi})}\label{subsec:optsecondinertialbound}
  
We can use MILPs to optimize the polynomial~$p$ in Theorem~\ref{thmquantum:2nd_inertial_chi}. However, in this case we must solve~$n-1$ MILPs to obtain the best possible bound, whereas the first inertia-type bound only required one in case of $k$-partial walk-regularity. Let~$G$ have spectrum~$\sp G=\big\{\theta_0^{[m_0]},\ldots, \theta_d^{[m_d]}\big\}$ and let~$\vecm = (m_0,\dots,m_d)\in \{0,1\}^{d+1}$. For each~$\ell \in \{1,\dots,n-1\}$, we solve the following MILP. Note, however, that it may be infeasible for certain values of~$\ell$ if there is no subset of multiplicities adding up to~$\ell$.

\begin{equation}
\boxed{\def\arraystretch{1.3}
\begin{array}{rll}
{\tt maximize} & 1 + \frac{n-\vecm^{\top} \vecb}{\ell} \\
{\tt subject\ to} & \sum_{j=0}^d \sum_{i = 0}^k a_i m_j\theta_j^i = 0& \\
 & \sum_{i = 0}^k a_i \theta_j^i - M b_j+\varepsilon \leq 0,&j=0,\dots,d\\
 & \sum_{i = 0}^k a_i \theta_j^i - M c_j \leq 0, &j=0,\dots,d\\
  & \sum_{i = 0}^k a_i \theta_j^i + M (1-c_j) -\epsilon \geq 0,&j=0,\dots,d\\
 & \vecm^\top \vecc = \ell & \\
 & \vecb \in \{0,1\}^{d+1}, \quad \vecc \in \{0,1\}^{n}&
\end{array}
}
\label{MILP:ind1:inertia2}
\end{equation}

As before, the variables~$a_i$ are the coefficients of the polynomial of degree at most~$k$,~$p(x) = a_k x^k +\cdots + a_0$, and the first constraint is the hypothesis of Theorem~\ref{thmquantum:2nd_inertial_chi},~$\tr p(A) = 0$. The second set of constraints implies that~$b_j = 1$ if~$p(\lambda_j) \ge 0$. Moreover, as the objective function minimizes~$\vecm^\top \vecb$, we do not have~$b_j=1$ unless it is forced by the constraints. Therefore,~$p(\lambda_j) \ge 0$ if and only if~$b_j = 1$. Similarly, the third set of constraints implies that $c_j = 1$ if~$p(\lambda_j) > 0$. Since, contrary to~$\vecm^\top \vecb$, the value of~$\vecm^\top \vecc$ is not minimized by the MILP (in fact, we assume it to be constant), we need to explicitly add the fourth set of constraints to ensure that also~$p(\lambda_j) > 0$ whenever~$c_j = 1$. Note that this is a correction to MILP (27) in~\cite{acfns20}, where these constraints are missing.

Summarizing the above, we have
\begin{itemize}
\item
$|j:p(\lambda_j)> 0|=\vecm^{\top}\vecc = \ell$ (fifth constraint),
\item
$|j:p(\lambda_j)= 0|=\vecm^{\top}(\vecb-\vecc)$,
\item
$|j:p(\lambda_j)< 0|=n-\vecm^{\top} \vecb$.
\end{itemize}
This means that an optimal solution of MILP~\eqref{MILP:ind1:inertia2} indeed corresponds to the maximum value for the bound in Theorem~\ref{thmquantum:2nd_inertial_chi}.

\begin{example}
    Consider again the 2-partially walk-regular graph~$G=C_6$ with~$\sp G = \big\{2^{[1]},1^{[2]},-1^{[2]},-2^{[1]}\big\}$. Let~$\ell=3$. The first two sets of constraints of MILP~\eqref{MILP:ind1:inertia2} are the same as in MILP~\eqref{MILP:ind1:inertiaWR}, hence we know from Example~\ref{ex:ind1:inertia1} that they simplify to~$2a_2+a_0=0$,~$b_0+b_2\ge 1$ and~$b_1+b_3\ge 1$. The third, fourth and fifth set of constraints are
    
    \begin{minipage}[t]{0.45\textwidth}
        \begin{alignat*}{7}
	     &a_0 & + & 2 &&a_1 &+&  &4& a_2 && -Mc_0 &&\le 0\\
      &a_0 & + & &&a_1 &+& &&a_2 && -Mc_1 &&\le 0\\
      &a_0 & - & &&a_1 &+& && a_2 && -Mc_2 &&\le 0\\
      &a_0 & - &  2&&a_1 &+&  &4& a_2 && -Mc_3 &&\le 0
	\end{alignat*}
    \end{minipage}
    \begin{minipage}[t]{0.45\textwidth}
        \begin{alignat*}{7}
	     &a_0 & + & 2 &&a_1 &+&  &4& a_2 && +M(1-c_0) &&-\varepsilon\le 0\\
      &a_0 & + & &&a_1 &+& &&a_2 && +M(1-c_1) &&-\varepsilon\le 0\\
      &a_0 & - & &&a_1 &+& && a_2 && +M(1-c_2) &&-\varepsilon\le 0\\
      &a_0 & - & 2 &&a_1 &+&  &4& a_2 && +M(1-c_3) &&-\varepsilon\le 0\\
      &c_0 & + & 2 &&c_1 &+&  &2& c_2 & + & c_3 = \ell.&&
	\end{alignat*}
    \vspace{-0.4cm}
    \end{minipage}
     
 \noindent The fourth set of constraints combines to
  \[M(1-c_2) - \varepsilon \ge a_2+a_1 \ge -\frac{1}{2}M(1-c_0)+\varepsilon, \]
  and
   \[\frac{1}{2}M(1-c_3) - \varepsilon \ge -a_2+a_1 \ge -M(1-c_1)+\varepsilon, \]
  which implies that~$c_0+c_2\le 1$ and~$c_1+c_3\le 1$. The vectors~$\vecb=(1,1,0,0)$ and~$\vecc=(1,1,0,0)$ satisfy these conditions, as well as the other constraints. The corresponding objective value is~$1+\frac{6-3}{3} = 2$, which is not tight, as~$\chi_2(G)=3$. However, if we solve the MILP for all possible values of~$\ell$ using Gurobi, we find that this is the best possible value for the bound in Theorem~\ref{thmquantum:2nd_inertial_chi}.
\end{example}

The algorithm gives a lower bound for the actual maximum of \linebreak MILP~\eqref{MILP:ind1:inertia2}, as we restricted the optimal polynomial. Nevertheless, there are several graphs for which it gives a tight bound, and hence for which the bound in Theorem~\ref{thmquantum:2nd_inertial_chi} is tight, such as the Heawood graph, Klein 7-regular graph and Moebius-Kantor graph.


Like MILP~\eqref{MILP:ind1:inertia}, MILP~\eqref{MILP:ind1:inertia2} is tight for the incidence graphs of projective planes~$PG(2,q)$ with~$q$ a prime power and the prism graphs~$G_n$ with~$n\neq 2\mod 4$. Note that the latter are generalized Petersen graphs with parameters~$(n,1)$. The bound is also tight for (generalized) Petersen graphs with~$(n,k)\in \{(5,2),(8,3),(10,2)\}$. The second graph is also known as the M\"obius-Kantor graph and is walk-regular, but not distance-regular, hence Delsarte's LP bound~\cite{delsarte1973algebraic} is not applicable in this case.

\subsection{Optimization of the Hoffman ratio-type bound (Theorem \ref{thm:ratiotype})}\label{LP:ratiotype}

Let $G = (V,E)$ have adjacency matrix $A$ and distinct eigenvalues $\theta_0 > \cdots > \theta_d$. Note that we can scale by a positive number and translate the polynomial used in Theorem \ref{thm:ratiotype} without changing the value of the bound. Hence, we can assume $W(p) -\lambda(p) =1$. Furthermore, ~$\lambda(p) < W(p)$, so the scaling does not flip the sign of the bound. Hence, the problem reduces to finding the $p$ which maximizes $p(\lambda_1) - \lambda(p)$, subject to the constraint $W(p) -\lambda(p) =1$. For each~$u\in V$ and~$\ell \in [1, d]$, assume that~$W(p) = (p(A))_{uu}$,~$0=\lambda(p) = p(\theta_\ell)$ and solve the Linear Program (LP) below. The maximum of these~$dn$ solutions then equals the best possible bound obtained by Theorem \ref{thm:ratiotype}.

   {\scriptsize{ 
    \begin{equation}\label{eq:ratio_chi_LP}
        \boxed{\begin{aligned}
            \text{variables: } &(a_0, \hdots,a_k) \\
            \text{input: }&\text{The adjacency matrix } A \text{ and its distinct eigenvalues } \{\theta_0,\hdots,\theta_d\}. \\
            &\text{A vertex } u \in V, \text{ an } \ell \in [1,d]. \text{ An integer } k.\\
            \text{output: }&(a_0, \hdots,a_k) \text{, the coefficients of a polynomial } p \\ 
            \ \\
            \text{maximize} \ \ &\sum_{i=0}^k a_i \theta_0^i -\sum_{i=0}^k a_i\theta_\ell^i\\
            \text{subject to} \ \ & \sum_{i=0}^k a_i((A^i)_{vv} - (A^i)_{uu}) \leq 0 , \ v \in V \setminus \{u\} \\
            &\sum_{i=0}^k a_i((A^i)_{uu} - \theta_\ell^i) = 1\\
            &\sum_{i=0}^k a_i(\theta_0^i - \theta_j^i) > 0, \ \ j \in [1,d] \\
            &\sum_{i=0}^k a_i(\theta_j^i - \theta_\ell^i) \geq 0, \ \ j \in [1,d]
        \end{aligned}}
    \end{equation}
    }}
    
    Here the objective function is simply $p(\lambda_1) - \lambda(p)$. The first constraint says $(p(A))_{uu} \geq (p(A))_{vv}$ for all vertices $v \neq u$, which ensures $W(p) = (p(A))_{uu}$. The second constraint gives $p$ the correct scaling and translation such that $W(p) - \lambda(p) = 1$. The third constraint says $p(\theta_0) > p(\theta_j)$ for all $j \in [1,d]$, which ensures $p(\lambda_1) > \lambda(p)$. And the final constraint says $p(\theta_\ell) \leq p(\theta_j)$ for all $j \in [1,d]$, which ensures $\lambda(p) = p(\theta_\ell)$.

\end{document}